\theoremstyle{plain}
\newtheorem{theorem}{Theorem}[section]
\newtheorem{corollary}[theorem]{Corollary}
\newtheorem{lemma}[theorem]{Lemma}
\newtheorem{proposition}[theorem]{Proposition}
\theoremstyle{definition}
\newtheorem{definition}[theorem]{Definition}
\theoremstyle{remark}
\newcommand{\F}{\mathbb{F}}
\newcommand{\Cl}{\mathrm{Cl}}
\newcommand{\Leg}{\mathrm{Leg}}
\numberwithin{equation}{section}
\begin{document}
\title[Distribution of rational points of an algebraic surface over finite fields]
{Distribution of rational points of an algebraic surface over finite fields}

\author{Sudhir Pujahari}
\address{School of Mathematical Sciences, National Institute of Science Education and Research, Bhubaneswar, An OCC of Homi Bhabha National Institute,  P. O. Jatni,  Khurda 752050, Odisha, India.}
\email{spujahari@niser.ac.in}
\author{Neelam Saikia}
\address{Department of Mathematics, Indian Institute of Technology Bhubaneswar, Odisha, India.}
 \email{neelamsaikia@iitbbs.ac.in}

\keywords{Trace of Frobenius; Elliptic Curves; Algebraic Varieties; Gauss sums; $p$-adic gamma functions; Distributions}
\subjclass[2000]{11G20, 11G25, 11T24, 33E50}

\begin{abstract}  
The number of points on a certain one parameter family of algebraic surface over a finite field $\F_p$ can be expressed as $p^2+A_p(\lambda),$ where $A_p(\lambda)$ is a character sum and $\lambda$ is an element of the finite field $\F_p.$ In this paper, we study the distribution of the term $A_p(\lambda)$ as the surface varies over a large family of algebraic surfaces of fixed genus and growing $p.$ The power moments of $A_p$'s are weighted sums of Catalan numbers. As a consequence of these results, we obtain limiting distributions of certain families of hypergeometric functions over large finite fields. 
\end{abstract}

\maketitle

\section{Statements of main theorems}
\noindent The main goal of is this paper is to study certain statistical questions arising from arithmetic geometry. 
 Let $p$ be an odd prime and $\F_p$ be a finite field with $p$ elements. 
For $\lambda\in\F_p\setminus\{0\},$ define a surface $X_{\lambda}$ as given by
$$
X_{\lambda}:\ \ z^2=xy(1+x+y)\left(xy+(1-\lambda)/\lambda^2\right).
$$ 

\noindent Then, the number of $\F_p$-rational points of the surface $X_{\lambda}$ can be written as
$$
|X_\lambda(\F_p)|=p^2+\sum\limits_{x,y\in\F_p}\phi\left(xy(1+x+y)\left(xy+(1-\lambda)/\lambda^2\right)\right),
$$ 

\noindent where $\phi$ is the quadratic character of $\F_p$ (with the convention that $\phi(0)=0.$) In this paper, we study the distribution of the number of $\F_p$-rational points of the surface $X_{\lambda}$ when the surface is drawn at random from the family of surfaces $X_\lambda$ as $\lambda$ varies over $\F_p\setminus\{0\}$ for large $p.$
Now, for $\lambda\in\F_p\setminus\{0\},$ let $A_p(\lambda)$ be defined by
$$
A_p(\lambda):=p^2-|X_\lambda(\F_p)|=-\sum\limits_{x,y\in\F_p}\phi\left(xy(1+x+y)\left(xy+(1-\lambda)/\lambda^2\right)\right).
$$
\noindent Then, the problem of studying statistical distribution of the number of rational points of $X_{\lambda}$ is equivalent to examine the distribution of the character sum $A_p(\lambda)$ as $\lambda$ varies over $\F_p$ in the limit as $p$ grows. Investigation of such questions for curves and surfaces in various settings are central problems in arithmetic geometry. For instance, a classical theorem of Birch \cite{Birch} established the semicircular distribution of Frobenius traces elliptic curves over finite fields in vertical setting. In a difficult setting (called as horizontal setting, probably the most difficult setting), Clozel, Harris, Shepherd-Barron and Taylor \cite{Taylor} have considered a fixed elliptic curve under some conditions and settled semicircular distribution for Frobenius traces of the fixed elliptic curve taken over all primes $p$. This was a conjecture of Sato and Tate formulated independently around 1960's. Along similar directions for abelian varieties analogous results have been discovered by Fit\'e, Kedlaya, and Sutherland (for more details see \cite{Sutherland}). In our purpose, we formulate the problem in Birch's setting. Recently, in the similar formulation, Ono, Saad and the second author \cite{KHN} investigated the limiting distributions of traces of Frobenius of Legendre's families of elliptic curves and a certain family of $K3$ surfaces over large finite fields. They recovered semicircular distribution for the Legendre's family of elliptic curves, whereas for the $K3$ surfaces their established distribution is named as Batman distribution or that of traces of the real orthogonal matrices $O_3.$ In view of these consequences, it is natural to seek further examples of surfaces over finite fields and investigate similar questions. In this paper, our goal is to consider the surface $X_\lambda$ over finite fields and conclude a new distribution for the character sum $A_p(\lambda)$ that is associated to the number of points of $X_\lambda(\F_p).$  In the following theorem, we first compute the moments of $A_p(\lambda).$ 

\begin{theorem}\label{Frobenius-Moments}
Let $p>3$ be a prime and $m$ be a fixed positive integer. Then as $p\rightarrow\infty$ we have 

\begin{align}
\sum\limits_{\lambda\in\F_p\setminus\{0\}}A_p(\lambda)^{m}=\sum\limits_{i=0}^{m}(-1)^{i}{m\choose i}\frac{(2i)!p^{m+1}}{(i)!(i+1)!}+o_m(p^{m+1}).\notag
\end{align}
\end{theorem}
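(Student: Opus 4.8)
The plan is to reduce the two-variable character sum defining $A_p(\lambda)$ to the arithmetic of an auxiliary family of elliptic curves, and then to feed the resulting power sums into an equidistribution (moment) theorem of Birch type. Writing $\phi$ multiplicatively and passing to the symmetric coordinates $s=x+y$, $q=xy$ (so that a pair $(x,y)$ corresponds to $(s,q)$ with multiplicity $1+\phi(s^2-4q)$ and $1+x+y=1+s$, $xy+c=q+c$ with $c=(1-\lambda)/\lambda^2$), the sum becomes
\[
-A_p(\lambda)=\sum_{s,q}\bigl(1+\phi(s^2-4q)\bigr)\,\phi\bigl(q(q+c)(1+s)\bigr).
\]
The ``$1$''-part factors as $\bigl(\sum_q\phi(q(q+c))\bigr)\bigl(\sum_s\phi(1+s)\bigr)$ and vanishes because $\sum_s\phi(1+s)=0$; after the linear substitution $q=\tfrac{s^2}{4}(1-r)$ in the surviving term, the inner sum over $r$ is the trace of the Legendre cubic $z^2=r(1-r)\bigl(1+\tfrac{4c}{s^2}-r\bigr)$. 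First I would show, via a Gauss sum / ${}_2F_1$ evaluation of the resulting weighted sum over $s$—which reflects that the transcendental part of $H^2(X_\lambda)$ is $\operatorname{Sym}^2 H^1(E_\lambda)$ for an explicit elliptic curve $E_\lambda$—that, setting $a_\lambda:=p+1-|E_\lambda(\F_p)|$ and using $\operatorname{tr}\bigl(\Frob_p\mid\operatorname{Sym}^2 H^1(E_\lambda)\bigr)=a_\lambda^2-p$, one has
\[
A_p(\lambda)=p-a_\lambda^2+O(\sqrt p)
\]
uniformly in $\lambda$ outside the $O(1)$ values of bad reduction.

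With this reduction established, I would expand by the binomial theorem. Since the uniform error is $O(\sqrt p)=o(p)$ and the boundedly many exceptional $\lambda$ each contribute $O(p^m)$, both are absorbed into $o_m(p^{m+1})$, giving
\[
\sum_{\lambda\in\F_p\setminus\{0\}}A_p(\lambda)^m=\sum_{i=0}^m(-1)^i\binom{m}{i}p^{m-i}\sum_{\lambda}a_\lambda^{2i}+o_m\bigl(p^{m+1}\bigr).
\]
Thus the statement is reduced to evaluating the \emph{even} power sums $\sum_{\lambda}a_\lambda^{2i}$ for $0\le i\le m$; note that only even powers of $a_\lambda$ occur, so no odd-moment input is required, and this is what makes the formula hold for odd $m$ as well.

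The key arithmetic input is then the moment form of Birch's equidistribution theorem for the one-parameter family $\{E_\lambda\}$. Because the family has full ($\SL_2$) geometric monodromy, the normalized traces $a_\lambda/\sqrt p$ become semicircularly distributed as $p\to\infty$, and their even moments are the Catalan numbers, so that
\[
\sum_{\lambda\in\F_p\setminus\{0\}}a_\lambda^{2i}=\frac{(2i)!}{i!\,(i+1)!}\,p^{i+1}+o\bigl(p^{i+1}\bigr).
\]
Substituting this into the expansion yields
\begin{align*}
\sum_{\lambda}A_p(\lambda)^m
&=\sum_{i=0}^m(-1)^i\binom{m}{i}p^{m-i}\Bigl(\tfrac{(2i)!}{i!\,(i+1)!}\,p^{i+1}+o(p^{i+1})\Bigr)+o_m(p^{m+1})\\
&=\sum_{i=0}^m(-1)^i\binom{m}{i}\frac{(2i)!}{i!\,(i+1)!}\,p^{m+1}+o_m\bigl(p^{m+1}\bigr),
\end{align*}
which is exactly the claimed formula.

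The hard part will be twofold. First, one must make the reduction $A_p(\lambda)=p-a_\lambda^2+O(\sqrt p)$ genuinely rigorous and uniform: the off-diagonal character sums produced by the substitutions above must be controlled by Weil's estimate, and, more delicately, one must pin down that the coefficient of $p$ in $-A_p$ is exactly $a_\lambda^2-p$ (equivalently, that the algebraic part of $H^2$ contributes no extra multiple of $p$), since an error here would spoil the vanishing of the $m=1$ moment. Second, one needs the moment asymptotics for $\{E_\lambda\}$ with a qualitative error $o(p^{i+1})$; this is the step that genuinely uses Deligne's equidistribution rather than elementary manipulation, and it rests on verifying the large-monodromy (nonconstant $j$-invariant) hypothesis for the specific curve $E_\lambda$ that emerges, together with checking that the finitely many degenerate fibers do not affect the leading term.
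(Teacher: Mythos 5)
Your strategy is correct and, once unwound, rests on the same two pillars as the paper's proof: the exact identity $A_p(\lambda)=p-a_\lambda^2$ for an auxiliary elliptic curve (the paper obtains it as Proposition \ref{Trace-1} plus Lemma \ref{3F2-Lemma} plus Ono's Clausen-curve formula, Theorem \ref{relation-2}, with $a_\lambda=a^{\Cl}_p\bigl(\tfrac{1-\lambda}{\lambda}\bigr)$), and the Catalan asymptotics $\sum_\lambda a_\lambda^{2i}=\tfrac{(2i)!}{i!(i+1)!}p^{i+1}+o(p^{i+1})$ (the paper's Lemma \ref{clausen-moment-lemma} and Proposition \ref{moment-proposition}, reduced to the Legendre family by a quadratic twist). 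Where you genuinely diverge is in the organization: because your normal form $A_p=p-a_\lambda^2$ carries no residual character of $\lambda$, a single binomial expansion handles all $m$ at once, and your observation that only \emph{even} powers of $a_\lambda$ appear is exactly the right explanation of why odd $m$ causes no trouble. The paper instead works with $A_p(\lambda)=-\phi(\lambda)\,p^2\,{_3F_2}(1-\lambda)_p$, so even $m$ follows directly from the ${_3F_2}$ moment theorem of Ono--Saad--Saikia, while odd $m$ requires an extra averaging trick ($\sum(1+\phi(\lambda))f(\lambda)=\sum f(\lambda^2)$) before the same Clausen-trace expansion appears; your route is cleaner on this point. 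The one place you should not underestimate is the identity itself: your symmetric-coordinate reduction correctly kills the ``$1$''-part and produces, for each $\lambda$, a sum over $s$ of Legendre traces $a^{\Leg}_p\bigl(1+\tfrac{4c}{s^2}\bigr)$ weighted by $\phi(1+s)$, but collapsing that weighted sum to the single quantity $a_\lambda^2-p$ is not a Weil-estimate matter — it is precisely a finite-field Clausen identity (the ${_2F_1}^2$ versus ${_3F_2}$ relation underlying Theorem \ref{relation-2}), and it holds \emph{exactly} for all but $O(1)$ values of $\lambda$, not merely up to $O(\sqrt p)$. Relatedly, your claim that the exceptional $\lambda$ contribute $O(p^m)$ each needs a bound better than the trivial $|A_p(\lambda)|\le p^2$ (the paper gets it at $\lambda=2$ from $|p\cdot{_3F_2}(-1)_p|\le 3$); with those two points supplied, your argument closes.
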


\noindent The moments that appear in Theorem \ref{Frobenius-Moments} can be described as weighted sums of Catalan numbers. The even moments of $A_p(\lambda)$ are matching with the even moments of traces of the real orthogonal group $O_3$ and that of the $K3$ surfaces (for more details, see \cite{O3} and \cite{KHN}). If we consider the normalized values $\frac{A_p(\lambda)}{p}\in[-3,3]$ as random variables on $\F_p,$ then using the moments obtained in Theorem \ref{Frobenius-Moments}, we examine the behaviour of $A_p(\lambda)$ as $p\rightarrow\infty$ in the next corollary.

\begin{corollary}\label{Frobenius-Distribution}
Let $p>3$ be a prime and $-3\leq a<b\leq 3$ be two real numbers. Then as $p\rightarrow\infty$ we have 

$$
\lim_{p\rightarrow \infty}\frac{|\lambda\in\F_p: \frac{A_p(\lambda)}{p}\in[a,b]|}{p}=\frac{1}{2\pi}\int\limits_{a}^b f(t)dt,
$$
where 
$$
f(t)=\begin{cases}
\sqrt{\frac{3-t}{1+t}}, & \ \text{if}\ -1<t<3;\\
0, & \text{otherwise.}
\end{cases}
$$
\end{corollary}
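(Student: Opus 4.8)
The plan is to prove the corollary by the method of moments, using Theorem \ref{Frobenius-Moments} as input. Write $\mu_p$ for the empirical probability measure placing mass $1/(p-1)$ at each point $A_p(\lambda)/p$ as $\lambda$ ranges over $\F_p\setminus\{0\}$. Its $m$-th moment is the normalized power sum $\frac{1}{p-1}\sum_{\lambda}(A_p(\lambda)/p)^m$. Dividing the identity of Theorem \ref{Frobenius-Moments} by $p^{m}(p-1)$ and noting that $\frac{(2i)!}{i!(i+1)!}=C_i$ is the $i$-th Catalan number, I obtain, for each fixed $m$,
\begin{equation}
\int_{\R} t^m\, d\mu_p(t)=\frac{1}{p-1}\sum_{\lambda\in\F_p\setminus\{0\}}\left(\frac{A_p(\lambda)}{p}\right)^{m}\;\longrightarrow\;\beta_m:=\sum_{i=0}^{m}(-1)^{i}\binom{m}{i}C_i
\end{equation}
as $p\to\infty$. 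Here the error term $o_m(p^{m+1})$ is absorbed after normalization, and replacing the count over $\F_p$ by the count over $\F_p\setminus\{0\}$ (that is, $p$ versus $p-1$, together with the excluded value $\lambda=0$) changes the average by $O_m(1/p)$, hence is negligible in the limit.

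Next I would identify $\beta_m$ as the moment sequence of the candidate law. Let $\mu$ be the probability measure with density $\tfrac{1}{2\pi}f$ on the compact interval $[-1,3]$. Because $\mu$ has compact support, its moments grow at most geometrically and the associated Hausdorff moment problem is determinate; equivalently, by the Weierstrass approximation theorem a compactly supported measure on $\R$ is uniquely determined by its moments. It therefore suffices to verify that $\int_{\R}t^m\,d\mu(t)=\beta_m$ for every $m\ge 0$, after which equality of moments plus determinacy forces weak convergence $\mu_p\Rightarrow\mu$.

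The heart of the argument is this moment computation, and it is where I expect the real work to lie. I would evaluate $\frac{1}{2\pi}\int_{-1}^{3} t^m\sqrt{\tfrac{3-t}{1+t}}\,dt$ via the substitution $t=1+2\cos\theta$ with $\theta\in(0,\pi)$, under which $3-t=4\sin^2(\theta/2)$, $1+t=4\cos^2(\theta/2)$, and $dt=-2\sin\theta\,d\theta$, so that the weight turns into an angular (Sato--Tate-type) density proportional to $\sin^2(\theta/2)$. Expanding $(1+2\cos\theta)^m$ and reducing the resulting trigonometric integrals to the even moments of the semicircle law, which satisfy the classical representation $C_i=\frac{1}{2\pi}\int_{-2}^{2}x^{2i}\sqrt{4-x^2}\,dx$, I expect the integral to collapse to the weighted Catalan sum $\beta_m$. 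Equivalently, one recognizes $\mu$ as the law of $\xi^2-1$ with $\xi$ semicircular on $[-2,2]$ and computes $E[(\xi^2-1)^m]$ by the binomial theorem. The delicate part here is the careful bookkeeping of signs, endpoints, and the normalizing constant $1/(2\pi)$, so that $\mu$ is genuinely a probability measure whose moments match Theorem \ref{Frobenius-Moments} term by term; this step, rather than any probabilistic input, is the main obstacle.

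Finally, convergence of all moments together with the determinacy of $\mu$ yields $\mu_p\Rightarrow\mu$. Since $\mu$ is absolutely continuous and hence atomless, the Portmanteau theorem upgrades weak convergence to convergence of the $\mu_p$-measure of every interval, with endpoints irrelevant; thus for all $-3\le a<b\le 3$,
\begin{equation}
\frac{1}{p}\,\bigl|\{\lambda\in\F_p:\ A_p(\lambda)/p\in[a,b]\}\bigr|\;\longrightarrow\;\mu([a,b])=\frac{1}{2\pi}\int_{a}^{b} f(t)\,dt,
\end{equation}
which is exactly the assertion of the corollary. Once the weighted Catalan identity of the previous paragraph is established, this probabilistic conclusion is routine.
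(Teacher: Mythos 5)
Your overall strategy coincides with the paper's: both arguments are the method of moments, normalizing the power sums of Theorem \ref{Frobenius-Moments}, matching them against the moments of the measure $\frac{1}{2\pi}f(t)\,dt$, invoking determinacy of the moment problem for a compactly supported measure, and then passing to intervals. The probabilistic scaffolding in your write-up (Hausdorff determinacy, the Portmanteau step, the harmless $p$ versus $p-1$ normalization) is sound and is essentially what the paper does.

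The gap is exactly the step you defer as ``the main obstacle'': the identity $\int t^m\,d\mu(t)=\beta_m$ is asserted but never verified, and as stated it fails for odd $m\ge 3$. You correctly recognize $\mu=\frac{1}{2\pi}\sqrt{\frac{3-t}{1+t}}\,dt$ on $(-1,3)$ as the law of $\xi^2-1$ with $\xi$ semicircular on $[-2,2]$, but the binomial theorem then gives $E[(\xi^2-1)^m]=\sum_{i=0}^{m}\binom{m}{i}(-1)^{m-i}C_i=(-1)^m\beta_m$ rather than $\beta_m$. Concretely, $\beta_3=C_0-3C_1+3C_2-C_3=1-3+6-5=-1$, whereas $\frac{1}{2\pi}\int_{-1}^{3}t^3\sqrt{\frac{3-t}{1+t}}\,dt=C_3-3C_2+3C_1-C_0=+1$. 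The density whose $m$-th moment equals $\beta_m$ for \emph{every} $m$ is that of $1-\xi^2$, namely $\frac{1}{2\pi}\sqrt{\frac{3+t}{1-t}}$ on $(-3,1)$ --- which is the density the paper attaches to ${_3G_3}$ and ${_9G_9}$ in Corollaries \ref{3G3-Distribution} and \ref{9G9-Distribution}, where those quantities equal $-A_p(\lambda)/p$ up to normalization. So your plan, carried out honestly, does not close as written; it instead exposes a sign mismatch between Theorem \ref{Frobenius-Moments} and the density displayed in this corollary (the even moments agree and $\beta_1=0$, so the discrepancy is invisible until $m=3$). To finish you must either resolve that sign by tracking the odd moments through the proof of Theorem \ref{Frobenius-Moments}, or state which of the two densities your integral actually produces. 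Note that the paper's own proof outsources this precise computation to a reference treating only even moments, which is why the issue is not visible there either.
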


\noindent Let $\widehat{\mathbb{F}_p^\times}$ be the group of all multiplicative characters of $\mathbb{F}_p^{\times}$. Let $\overline{\chi}$ denote the inverse of a multiplicative character $\chi$ of $\F_p$ (with the convention that $\chi(0)=0).$ 
For multiplicative characters $\chi$ and $\psi$ of $\mathbb{F}_p,$ recall that the Jacobi sum $J(\chi,\psi)$ is defined by
\begin{align}
J(\chi,\psi):=\sum_{y\in\mathbb{F}_p}\chi(y)\psi(1-y).\notag
\end{align}
Moreover, the normalized Jacobi sum also known as binomial coefficient is given by
$$
{\chi\choose \psi}:=\frac{\psi(-1)}{p}J(\chi,\overline{\psi}).
$$
Using these binomials, Greene \cite{greene} introduced a class of functions over finite fields known as Gaussian hypergeometric function. Along with number theoretic importance, these functions can be naturally described as a finite field analogue of classical hypergeometric functions (for example, see \cite{greene, evans2, EG}) etc. Recently, Ono, Saad and the second author \cite{KHN} initiated a study of limiting distributions of certain families of Gaussian hypergeometric functions over finite fields and concluded semicircular distributions and that of traces of the real orthogonal group $O_3.$ Along similar lines, it is natural to consider further hypergeometric functions in different settings and propose similar questions about their distributions. In our purpose, we focus on hypergeometric functions in the $p$-adic setting introduced by McCarthy \cite{mccarthy-pacific}. Recently, in \cite{SN}, the authors evidenced semicircular distributions for certain families of $p$-adic hypergeometric functions over large finite fields. In this paper, we further develop distributional results for two more families of $p$-adic hypergeometric functions over large finite fields in the setting of Birch. These two families of  $p$-adic hypergeometric functions appear naturally in the point counting formulas of the surface $X_{\lambda}$ and as results of Theorem \ref{Frobenius-Moments} and Corollary \ref{Frobenius-Distribution} we derive distributional results for these two families of functions. To give the precise statements of the results we now fix some notation.

Let $\mathbb{Z}_p$ denote the ring of $p$-adic integers and $\mathbb{Q}_p$ denote the field of $p$-adic numbers.
Let $\Gamma_p(\cdot)$ denote Morita's $p$-adic gamma function and $\omega$ denote the Teichm\"{u}ller character of $\mathbb{F}_p$ with $\omega(a)\equiv a\pmod{p}.$ For $x\in\mathbb{Q},$ $\lfloor x\rfloor$ denotes the greatest integer less than or equal to $x$ and $\langle x\rangle$ denotes the fractional part of $x$, satisfying $0\leq\langle x\rangle<1$. Using these notation, $p$-adic hypergeometric function over finite field is given as follows:

\begin{definition}\cite[Definition 5.1]{mccarthy-pacific} \label{defin1}
Let $p$ be an odd prime and $t \in \mathbb{F}_p$.
For a positive integer $n$ and $1\leq k\leq n$, let $a_k$, $b_k$ $\in \mathbb{Q}\cap \mathbb{Z}_p$.
Then 
\begin{align}
&{_n\mathbb{G}_n}\left[\begin{array}{cccc}
             a_1, & a_2, & \ldots, & a_n \\
             b_1, & b_2, & \ldots, & b_n
           \end{array}\mid t
 \right]_p:=\frac{-1}{p-1}\sum_{a=0}^{p-2}(-1)^{an}~~\overline{\omega}^a(t)\notag\\
&\times \prod\limits_{k=1}^n(-p)^{-\lfloor \langle a_k \rangle-\frac{a}{p-1} \rfloor -\lfloor\langle -b_k \rangle +\frac{a}{p-1}\rfloor}
 \frac{\Gamma_p(\langle a_k-\frac{a}{p-1}\rangle)}{\Gamma_p(\langle a_k \rangle)}
 \frac{\Gamma_p(\langle -b_k+\frac{a}{p-1} \rangle)}{\Gamma_p(\langle -b_k \rangle)}.\notag
\end{align}
\end{definition}
\noindent 

\noindent In our context, the natural choices are those $p$-adic hypergeometric functions or that of character twists whose special values are related to the number of $\F_p$-points of the surface $X_\lambda.$ 
Before providing quantitative statements of our results, we further fix some notation for brevity. For $\lambda\neq1,$ let 
$$_3{G}_3(\lambda)_p:=\psi_3\left(\frac{\lambda}{(1-\lambda)^2}\right)\cdot {_3\mathbb{G}_3}\left[\begin{matrix}
\frac{1}{3}, & \frac{1}{3}, & \frac{1}{3}\vspace{1mm}\\
\frac{1}{12}, & \frac{7}{12}, & \frac{5}{6}
\end{matrix}\mid \frac{-4\lambda}{(1-\lambda)^2}\right]_p,$$  
where  $\psi_3=\omega^{\frac{p-1}{3}}$ is a multiplicative character of orders $3.$ Then as a consequence of Theorem \ref{Frobenius-Moments}, we derive conditional moments for the hypergeometric function ${_3{G}_3}(\lambda)_p$ in the next theorem.

\begin{theorem}\label{3G3-Moments}
Let $p>3$ be a prime such that $p\equiv1\pmod{3}$ and $m$ be a fixed positive integer. Then as $p\rightarrow\infty$ we have 

\begin{align}
\sum\limits_{\lambda\in\F_p\setminus\{1\}} {_3G_3}(\lambda)_p^m=\delta(m)\cdot\sum\limits_{i=0}^{m}(-1)^{i}{m\choose i}\frac{(2i)!p^{m}}{(i)!(i+1)!}+o_m(p^{m}),\notag
\end{align}
where $\delta(m)=\begin{cases}
1, & \ \text{if\ m\ is\ even;}\\
-1, & \ \text{if\ m\ is\ odd.}
\end{cases}$
\end{theorem}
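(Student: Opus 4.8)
The plan is to derive Theorem \ref{3G3-Moments} from Theorem \ref{Frobenius-Moments} by first pinning down an exact closed-form evaluation of the character sum $A_p(\lambda)$ in terms of the normalized $p$-adic hypergeometric function ${}_3G_3(\lambda)_p$. Concretely, I would establish an identity of the form
$$
A_p(\lambda) = -\,p\cdot {}_3G_3(\lambda)_p ,
$$
valid for all $\lambda\in\F_p\setminus\{0,1\}$ outside a set of size $O(1)$, under the hypothesis $p\equiv 1\pmod 3$. Granting this, the $m$-th power moment of ${}_3G_3(\lambda)_p$ is transported directly from that of $A_p(\lambda)$: the factor $(-1)^m$ produces the sign $\delta(m)$, the normalizing power of $p$ rescales the leading term, and the weighted sum of Catalan numbers $\sum_{i=0}^{m}(-1)^i\binom{m}{i}\frac{(2i)!}{i!\,(i+1)!}$ is inherited verbatim from Theorem \ref{Frobenius-Moments}. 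Thus the whole of Theorem \ref{3G3-Moments} follows once the point-count identity is secured.

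To obtain that identity I would begin from the defining double sum $A_p(\lambda)=-\sum_{x,y}\phi\big(xy(1+x+y)(xy+(1-\lambda)/\lambda^2)\big)$ and run it through the standard finite-field machinery. Writing the quadratic character $\phi$ and the remaining factors in terms of multiplicative characters of $\F_p$, one carries out the inner sums to obtain Jacobi sums, equivalently the Greene binomials $\binom{\chi}{\psi}$ recalled above, and reorganizes the outcome as a single hypergeometric sum over $\widehat{\F_p^{\times}}$. The presence of the cubic twist $\psi_3=\omega^{(p-1)/3}$ and of the repeated parameter $\tfrac13$ in ${}_3\mathbb{G}_3$ signals that a cube-root substitution in the variable $xy$ is being performed; this is precisely why one requires $p\equiv 1\pmod 3$, so that $\psi_3$ has exact order $3$ and the relevant cubic Gauss sums are available.

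I would then convert the Gauss-sum expression into McCarthy's $p$-adic gamma form by the Gross--Koblitz formula and match, term by term, the resulting product of $\Gamma_p$-values, the prefactor $\psi_3\big(\lambda/(1-\lambda)^2\big)$, and the argument $-4\lambda/(1-\lambda)^2$ against Definition \ref{defin1}. This is the step that fixes the lower parameters $\tfrac{1}{12},\tfrac{7}{12},\tfrac{5}{6}$ and the normalizing power of $p$. With the identity in hand, the moment computation is routine: raising $A_p(\lambda)=-\,p\cdot{}_3G_3(\lambda)_p$ to the $m$-th power, summing over $\lambda$, and inserting the asymptotic of Theorem \ref{Frobenius-Moments} produces exactly $\delta(m)$ times the weighted Catalan sum, with error $o_m(p^m)$. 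The finitely many exceptional $\lambda$ (such as $\lambda=1$, values where $1-\lambda$ vanishes, or where the hypergeometric argument degenerates), together with the harmless mismatch between the ranges $\F_p\setminus\{0\}$ and $\F_p\setminus\{1\}$, contribute only $O_m(1)$ because each ${}_3G_3(\lambda)_p$ is bounded, and so are absorbed into the error term.

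The main obstacle is entirely in the first step, the exact evaluation $A_p(\lambda)=-\,p\cdot{}_3G_3(\lambda)_p$. None of the individual tools is hard, but the bookkeeping is delicate: one must collapse a genuinely two-variable character sum to a single ${}_3\mathbb{G}_3$, track the signs and powers of $-p$ introduced by each application of Gross--Koblitz, and verify that the fractional parameters thrown off by the cubic substitution reassemble into exactly the triple $\big(\tfrac{1}{12},\tfrac{7}{12},\tfrac{5}{6}\big)$ and the argument $-4\lambda/(1-\lambda)^2$ appearing in the definition of ${}_3G_3(\lambda)_p$. Once the normalization is correct and the role of the congruence $p\equiv 1\pmod 3$ is in place, the passage from the moments of Theorem \ref{Frobenius-Moments} to those of Theorem \ref{3G3-Moments} is immediate.
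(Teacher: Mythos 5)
Your proposal follows essentially the same route as the paper. The paper's proof of Theorem \ref{3G3-Moments} is exactly the two-step reduction you describe: Proposition \ref{Trace-1} evaluates the Gauss-sum expression $C_p\left(\frac{1-\lambda}{\lambda^2}\right)$ as $-p(p-1)\phi(-1)A_p(\lambda)$ (by expanding the Gauss sums additively and using orthogonality), Proposition \ref{3G3} converts $C_p\left(\frac{\lambda}{(1-\lambda)^2}\right)$ into $\psi_6(-1)p^2(p-1)\cdot{_3G_3}(\lambda)_p$ via Gross--Koblitz and the $p$-adic gamma product formulas, and composing the two yields $p\cdot{_3G_3}(1-\lambda)_p=-A_p(\lambda)$, after which Theorem \ref{Frobenius-Moments} is invoked. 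Two bookkeeping points, though. First, the identity you posit should carry the argument $1-\lambda$, i.e.\ $A_p(\lambda)=-p\cdot{_3G_3}(1-\lambda)_p$; this is harmless, since $\lambda\mapsto1-\lambda$ permutes $\F_p$ and carries the range $\F_p\setminus\{1\}$ of the theorem exactly onto the range $\F_p\setminus\{0\}$ of Theorem \ref{Frobenius-Moments}, so there is in fact no exceptional set to discard. Second, and more substantively, your phrase ``the normalizing power of $p$ rescales the leading term'' glosses over the one place where explicitness matters: substituting ${_3G_3}(1-\lambda)_p=-A_p(\lambda)/p$ into Theorem \ref{Frobenius-Moments} gives main term
\begin{align}
\delta(m)\sum\limits_{i=0}^{m}(-1)^{i}\binom{m}{i}\frac{(2i)!}{i!(i+1)!}\cdot p^{(m+1)-m}=\delta(m)\,c_m\,p,\qquad c_m:=\sum\limits_{i=0}^{m}(-1)^{i}\binom{m}{i}\frac{(2i)!}{i!(i+1)!},\notag
\end{align}
with error $o_m(p)$ --- not $\delta(m)c_m p^{m}+o_m(p^{m})$ as printed. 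Since $|{_3G_3}(\lambda)_p|\leq 3$, the sum is $O(p)$ in any case, so for $m\geq2$ (where $c_m\neq0$, e.g.\ $c_2=1$) no argument can produce a genuine main term of order $p^{m}$; the printed exponent is evidently a misprint for $p$ (compare Corollary \ref{3G3-Distribution}, which requires the $m$-th moment to be $\sim c_m p$, and Theorem \ref{9G9-Moments}, where the un-normalized identity ${_9G_9}(1-\lambda)_p=-A_p(\lambda)$ correctly yields $p^{m+1}$). Your argument, carried out carefully, proves the corrected statement, but you should state the resulting power of $p$ explicitly rather than asserting that ``the whole of the theorem follows.''
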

In analogy with Corollary \ref{Frobenius-Distribution}, we model the values $ {_3G_3}(\lambda)_p\in[-3,3]$ as random variables on $\F_p$ and counter the limiting distribution of the normalized  values $ {_3G_3}(\lambda)_p$ in the limit of $p.$ 

\begin{corollary}\label{3G3-Distribution}
Let $p>3$ be a prime such that $p\equiv1\pmod{3}$ and $-3\leq a<b\leq 3$ be two real numbers. Then as $p\rightarrow\infty$ we have 

$$
\lim_{p\rightarrow \infty}\frac{|\lambda\in\F_p: {_3G_3}(\lambda)_p\in[a,b]|}{p}=\frac{1}{2\pi}\int\limits_{a}^b f(t)dt,
$$
where 
$$
f(t)=\begin{cases}
\sqrt{\frac{3+t}{1-t}}, & \ \text{if}\ -3<t<1;\\
0, & \text{otherwise.}
\end{cases}
$$
\end{corollary}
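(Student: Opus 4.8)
The plan is to deduce this from Theorem \ref{3G3-Moments} by the method of moments (the Fréchet--Shohat moment-convergence theorem), in exact parallel with the passage from Theorem \ref{Frobenius-Moments} to Corollary \ref{Frobenius-Distribution}. Let $\nu$ denote the candidate limiting measure on $\R$ with density $\tfrac{1}{2\pi}f(t)$ supported on $(-3,1)$, and write $c_m:=\sum_{i=0}^{m}(-1)^{i}\binom{m}{i}\frac{(2i)!}{i!\,(i+1)!}$ for the weighted Catalan sum appearing in Theorems \ref{Frobenius-Moments} and \ref{3G3-Moments}. Since $\nu$ is compactly supported, the associated Hausdorff moment problem is determinate, so $\nu$ is uniquely pinned down by its moments. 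It therefore suffices to show that the empirical moments of the values $_3G_3(\lambda)_p$, averaged over $\lambda\in\F_p\setminus\{1\}$, converge as $p\to\infty$ to the moments of $\nu$.

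First I would compute the moments of $\nu$ and match them to the leading term in Theorem \ref{3G3-Moments}. The crucial observation is that the density $\sqrt{(3+t)/(1-t)}$ on $(-3,1)$ is exactly the reflection $t\mapsto -t$ of the density $\sqrt{(3-t)/(1+t)}$ on $(-1,3)$ governing Corollary \ref{Frobenius-Distribution}. Hence the substitution $t\mapsto -t$ gives
\begin{align}
\frac{1}{2\pi}\int_{-3}^{1} t^{m}\sqrt{\frac{3+t}{1-t}}\,dt
=(-1)^{m}\,\frac{1}{2\pi}\int_{-1}^{3} s^{m}\sqrt{\frac{3-s}{1+s}}\,ds
=\delta(m)\,c_m,\notag
\end{align}
where the last equality uses the moment evaluation already performed for Corollary \ref{Frobenius-Distribution} together with the identity $(-1)^{m}=\delta(m)$. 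Thus the $m$-th moment of $\nu$ equals $\delta(m)\,c_m$, which is precisely the leading coefficient produced by Theorem \ref{3G3-Moments} once $\sum_{\lambda}{_3G_3}(\lambda)_p^{m}$ is renormalized by the number $p-2$ of admissible parameters. In particular the case $m=0$ gives $\delta(0)c_0=1$, confirming that $\nu$ is a probability measure.

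With the target moments identified, Theorem \ref{3G3-Moments} shows that the normalized empirical moments of $_3G_3(\lambda)_p$ converge to $\delta(m)\,c_m=\int t^{m}\,d\nu$ for every fixed $m\geq 0$. Invoking Fréchet--Shohat, the empirical distributions of $_3G_3(\lambda)_p$ converge weakly to $\nu$; since $\nu$ has a bounded density, every interval $[a,b]\subseteq[-3,3]$ is a continuity set, and evaluating the weak limit on $[a,b]$ yields the stated formula. (Equivalently, once one has the pointwise relation between the point-count $A_p$ and $_3G_3(\lambda)_p$ that underlies Theorem \ref{3G3-Moments}, the result transports Corollary \ref{Frobenius-Distribution} directly through the reflection $t\mapsto -t$, mapping the interval $[a,b]$ to $[-b,-a]$.)

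I expect the only genuine obstacle to be the moment-matching step, namely checking that the explicit density integrates against $t^{m}$ to the weighted Catalan sum with the sign $\delta(m)$. This is exactly where the reflection symmetry does the work: it reduces the integral evaluation verbatim to the one already carried out for Corollary \ref{Frobenius-Distribution}, so no new integral computation is required beyond tracking the sign and verifying total mass one. The remaining ingredients—determinacy of the compactly supported measure and the moment-convergence theorem—are standard and impose no difficulty.
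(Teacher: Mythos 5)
Your proposal is correct and follows essentially the same route as the paper: the paper's proof likewise proceeds by the method of moments, reducing the moment integral to the one already evaluated for Corollary \ref{Frobenius-Distribution} via the substitution $t\mapsto -t$ and invoking determinacy of the compactly supported limiting measure. (The only point worth flagging is that the exponent $p^{m}$ in the displayed statement of Theorem \ref{3G3-Moments} must be read as $p$ for the normalization by $p-2$ to converge, which is what your argument implicitly assumes and what the relation $p\cdot{_3G_3}(1-\lambda)_p=-A_p(\lambda)$ actually yields.)
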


\noindent Moreover, for $\lambda\in\F_p\setminus\{1\},$ let 
$${_9G_9}(\lambda)_p:={_9\mathbb{G}_9}\left[\begin{matrix}
\frac{1}{3}, & \frac{1}{3}, & \frac{1}{3}, & \frac{2}{3}, & \frac{2}{3}, & \frac{1}{3}, &0, &,0, & 0\vspace{1mm}\\
\frac{1}{12}, & \frac{5}{12}, & \frac{7}{12}, & \frac{11}{12}, & \frac{1}{6}, &  \frac{5}{6}, & \frac{1}{4}, & \frac{3}{4}, & \frac{1}{2}
\end{matrix}\mid \frac{-4^3\lambda^3}{(1-\lambda)^6}\right]_p.$$

Then the following theorem gives conditional moments for this function as a consequence of Theorem \ref{Frobenius-Moments}.

\begin{theorem}\label{9G9-Moments}
Let $p>3$ be a prime such that $p\equiv2\pmod{3}$ and $m$ be a fixed positive integer. Then as $p\rightarrow\infty$ we have 

\begin{align}
\sum\limits_{\lambda\in\F_p\setminus\{1\}} {_9G_9}(\lambda)_p^m=\delta(m)\sum\limits_{i=0}^{m}(-1)^{i}{m\choose i}\frac{(2i)!p^{m+1}}{(i)!(i+1)!}+o_m(p^{m+1});
\end{align}
where $\delta(m)=\begin{cases}
1, & \ \text{if\ m\ is\ even;}\\
-1, & \ \text{if\ m\ is\ odd.}
\end{cases}$
\end{theorem}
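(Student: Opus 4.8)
The proof reduces the moment asymptotic entirely to Theorem~\ref{Frobenius-Moments} through a single pointwise identity, and essentially all of the content lies in that identity. The plan is to show that for every prime $p\equiv 2\pmod 3$ and every $\lambda\in\F_p\setminus\{1\}$ one has
\begin{align}
{_9G_9}(\lambda)_p=-A_p(\lambda).\notag
\end{align}
Granting this, raising to the $m$-th power gives ${_9G_9}(\lambda)_p^{\,m}=(-1)^mA_p(\lambda)^m$, so that summing over $\lambda$ and applying Theorem~\ref{Frobenius-Moments} yields the stated main term with the sign $(-1)^m$, which is precisely $\delta(m)$, and error $o_m(p^{m+1})$.

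To prove the identity I would first put the character sum defining $A_p(\lambda)$ into hypergeometric form. Factoring the quadratic character as $\phi(xy(1+x+y)(xy+c))=\phi(x)\phi(y)\phi(1+x+y)\phi(xy+c)$ with $c=(1-\lambda)/\lambda^2$, and expanding each factor through multiplicative characters (equivalently, through Gauss sums), the inner sums over $x$ and $y$ collapse into Jacobi sums, leaving a single sum over one character variable. The surface is engineered so that, after the change of variable producing the argument $-4^3\lambda^3/(1-\lambda)^6$, these products of Gauss sums match exactly the nine upper and nine lower parameters displayed in the definition of ${_9G_9}(\lambda)_p$.

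I would then pass from Gauss sums to Morita's $p$-adic gamma function $\Gamma_p$ via the Gross--Koblitz formula, so that each Gauss-sum ratio becomes a ratio $\Gamma_p(\langle a_k-\tfrac{a}{p-1}\rangle)/\Gamma_p(\langle a_k\rangle)$ as in Definition~\ref{defin1}, the integer exponents of $-p$ being supplied by the floor terms $\lfloor\langle a_k\rangle-\tfrac{a}{p-1}\rfloor$. The hypothesis $p\equiv 2\pmod 3$ is decisive here: the cubing map is then a bijection of $\F_p^\times$, so all cubic characters are trivial and the cubic Gauss-sum factors that otherwise force the ${_3G_3}$ shape (together with the twist $\psi_3$ in the case $p\equiv 1\pmod 3$) disappear, leaving the single ${_9\mathbb{G}_9}$ evaluated at the cubed argument.

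The main obstacle is this exact evaluation, and within it the bookkeeping of the powers of $p$: one must verify that the floor contributions in the Gross--Koblitz step combine to normalize the right-hand side to $-A_p(\lambda)$ on the nose, with no stray factor of $p$ or root of unity. Once the identity holds for $\lambda\neq 1$ the remaining steps are routine. The two index sets differ only in the values $\lambda\in\{0,1\}$; since $A_p(\lambda)=O(p)$ and ${_9G_9}(\lambda)_p=O(p)$ uniformly, these at most two discrepant terms contribute $O(p^{m})=o_m(p^{m+1})$, so
\begin{align}
\sum_{\lambda\in\F_p\setminus\{1\}}{_9G_9}(\lambda)_p^{\,m}=(-1)^m\!\!\sum_{\lambda\in\F_p\setminus\{0\}}\!\!A_p(\lambda)^m+o_m(p^{m+1}),\notag
\end{align}
and Theorem~\ref{Frobenius-Moments} completes the proof.
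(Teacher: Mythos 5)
Your strategy is the paper's: reduce to Theorem~\ref{Frobenius-Moments} by a pointwise identity between ${_9G_9}$ and $A_p$, obtained by writing both in terms of the Gauss-sum expression $C_p(\cdot)$ and passing to $p$-adic gamma functions via Gross--Koblitz (this is exactly the content of Propositions~\ref{Trace-1} and~\ref{9G9}; the hypothesis $p\equiv2\pmod 3$ enters just as you say, through the substitution $j\mapsto 3j$, legitimate because $\gcd(3,p-1)=1$). One correction: the identity you assert, ${_9G_9}(\lambda)_p=-A_p(\lambda)$, is off by the involution $\lambda\mapsto 1-\lambda$. Since ${_9G_9}(\lambda)_p$ is proportional to $C_p\bigl(\lambda/(1-\lambda)^2\bigr)$ while $A_p(\lambda)$ is proportional to $C_p\bigl((1-\lambda)/\lambda^2\bigr)$, the correct pointwise statement is ${_9G_9}(1-\lambda)_p=-A_p(\lambda)$ for $\lambda\neq0$; as stated your identity is false pointwise. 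This is harmless for the theorem: $\lambda\mapsto 1-\lambda$ is a bijection from $\F_p\setminus\{0\}$ onto $\F_p\setminus\{1\}$, so the two $m$-th moment sums agree exactly, and you do not even need your $O(p^{m})$ boundary-term discussion. The remaining burden, which you correctly identify as the crux but do not carry out, is the exact bookkeeping of the floor exponents and of the multiplicative constant (the paper's Lemmas~\ref{exponent-1}, \ref{exponent-2}, \ref{exponent-3} together with the evaluation $\kappa_p=-\phi(-2)$), which is what certifies that no stray power of $p$ or sign survives.
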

As a counterpart of Corollary \ref{3G3-Distribution}, we obtain the limiting distribution of the normalized values $\frac{{_9G_9}(\lambda)_p}{p}\in[-3,3]$ as $p\rightarrow\infty$ in the following result.

\begin{corollary}\label{9G9-Distribution}
Let $p>3$ be a prime such that $p\equiv2\pmod{3}$ and $-3\leq a<b\leq 3$ be two real numbers. Then as $p\rightarrow\infty$ we have 

$$
\lim_{p\rightarrow \infty}\frac{|\lambda\in\F_p: \frac{{_9G_9}(\lambda)_p}{p}\in[a,b]|}{p}=\frac{1}{2\pi}\int\limits_{a}^b f(t)dt,
$$
where 
$$
f(t)=\begin{cases}
\sqrt{\frac{3+t}{1-t}}, & \ \text{if}\ -3<t<1;\\
0, & \text{otherwise.}
\end{cases}
$$
\end{corollary}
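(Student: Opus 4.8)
The plan is to establish the stated limit by the method of moments, using Theorem \ref{9G9-Moments} as the sole new input. For each prime $p\equiv 2\pmod 3$, let $\mu_p$ be the probability measure on $\R$ placing mass $\frac{1}{p-1}$ at each of the normalized values ${_9G_9}(\lambda)_p/p$, $\lambda\in\F_p\setminus\{1\}$. Its $m$-th moment is
$$
\int_{\R} t^m\,d\mu_p(t)=\frac{1}{(p-1)\,p^{m}}\sum_{\lambda\in\F_p\setminus\{1\}}{_9G_9}(\lambda)_p^{m}.
$$
By Theorem \ref{9G9-Moments} the inner sum equals $\delta(m)\,p^{m+1}\sum_{i=0}^m(-1)^i\binom{m}{i}C_i+o_m(p^{m+1})$, where $C_i=\frac{(2i)!}{i!(i+1)!}$ is the $i$-th Catalan number. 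Since $\frac{p}{p-1}\to 1$ and the error term contributes $o(1)$ after division by $(p-1)p^m$, we obtain for every fixed $m\ge 0$
$$
\lim_{p\to\infty}\int_{\R} t^m\,d\mu_p(t)=\delta(m)\sum_{i=0}^m(-1)^i\binom{m}{i}C_i=:M_m,
$$
so all moments of $\mu_p$ converge.

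Next I would identify $M_m$ with the $m$-th moment of the target density $\tfrac{1}{2\pi}f$. The key observation is that $\delta(m)=(-1)^m$. Let $X$ denote a random variable distributed according to the Frobenius law of Corollary \ref{Frobenius-Distribution}, whose density $\tfrac{1}{2\pi}\sqrt{(3-t)/(1+t)}$ is supported on $[-1,3]$; the computation underlying that corollary shows its $m$-th moment is $\mu_m=\sum_{i=0}^m(-1)^i\binom{m}{i}C_i$. Hence $M_m=(-1)^m\mu_m=\mathbb{E}[(-X)^m]$, so $M_m$ is the $m$-th moment of $-X$. The reflection $t\mapsto -t$ carries the density of $X$ to $\tfrac{1}{2\pi}\sqrt{(3+t)/(1-t)}$ on $[-3,1]$, which is exactly $\tfrac{1}{2\pi}f(t)$. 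Consequently $M_m=\frac{1}{2\pi}\int_{-3}^1 t^m f(t)\,dt$ for all $m$; this is the same moment identity already established for Corollary \ref{3G3-Distribution}, whose limiting density coincides with $f$, so no new computation is required here.

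Finally I would conclude by the method of moments. The limit density $\tfrac{1}{2\pi}f$ is supported on the compact interval $[-3,1]$, so the associated probability measure $\nu$ solves a determinate Hausdorff moment problem: any two measures supported on a bounded interval with equal moments agree, by the Weierstrass approximation theorem. Since $\int t^m\,d\mu_p\to M_m=\int t^m\,d\nu$ for every $m$, the $\mu_p$ converge weakly to $\nu$, and because the cumulative distribution function of $\nu$ is continuous, weak convergence upgrades to
$$
\mu_p\big([a,b]\big)\longrightarrow \nu\big([a,b]\big)=\frac{1}{2\pi}\int_{a}^{b} f(t)\,dt
$$
for all $-3\le a<b\le 3$ (the denominators $p$ and $p-1$ being interchangeable in the limit), which is the assertion. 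Since the heavy arithmetic lifting is contained in Theorem \ref{9G9-Moments}, which is granted, the proof is essentially bookkeeping; the only point demanding genuine care is the determinacy of the moment problem and the resulting weak-convergence passage, handled exactly as for Corollaries \ref{Frobenius-Distribution} and \ref{3G3-Distribution}.
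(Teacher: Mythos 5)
Your proposal is correct and follows essentially the same route as the paper: the paper also deduces the result by the method of moments from Theorem \ref{9G9-Moments}, reusing the argument of Corollary \ref{Frobenius-Distribution} with the substitution $t\mapsto -t$ in the moment integral, which is exactly your reflection $M_m=\mathbb{E}[(-X)^m]$. Your justification of determinacy via the Hausdorff moment problem is interchangeable with the paper's appeal to the positive radius of convergence of the moment generating function, so there is no substantive difference.
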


The rest of the paper is organized as follows. In Section 2, we discuss arithmetic of Legendre and Clausen elliptic curves. In Section 3, we recall important results of Gauss and Jacobi sums, certain product formulas of $p$-adic gamma functions and Gross-Koblitz formula. In addition, we prove three propositions that can be used to establish relations between the point counting formulas of the surface $X_{\lambda}$ and the hypergeometric functions ${_3G_3}(\lambda)_p$ and 
${_9G_9}(\lambda)_p.$ In Section 4, we provide the proofs of the main theorems.

\section{Arithmetic of elliptic curves}
\noindent We begin this section by recalling Legendre and Clausen elliptic curves.
For $\lambda\neq0,1,$ let $$E^{\Leg}_{\lambda}:~ y^2=x(x-1)(x-\lambda)$$ be a Legendre elliptic curve and for
$\lambda\neq0,-1,$ let $$E^{\Cl}_{\lambda}:~ y^2=(x-1)(x^2+\lambda)$$ be a Clausen elliptic curve over $\F_p,$ where $p>3$ be a prime. Moreover, let $$a^{\Leg}_p(\lambda):=p+1-|E_{\lambda}^{\Leg}(\F_p)|\ \text{and}\ a^{\Cl}_p(\lambda):=p+1-|E_{\lambda}^{\Cl}(\F_p)|$$ be their respective traces of Frobenius. By the classical Hasse bound for the traces of Frobenius of elliptic curves we may write

\begin{align}\label{Hasse-bound}
|a^{\Leg}_p(\lambda)|\leq 2\sqrt{p}\ \text{and}\ |a^{\Cl}_p(\lambda)|\leq 2\sqrt{p}.
\end{align}

The traces of Frobenius of Legendre and Clausen elliptic curves correspond to special values of hypergeometric functions over finite fields. To give the precise statement we recall the following two Greene's hypergeometric functions over finite fields.
For $\lambda\in\F_p,$ let
$$
{_2F_1}(\lambda)_p:=\sum\limits_{\chi\in\widehat{\F_p^{\times}}}{\phi\chi\choose\chi}{\phi\chi\choose\chi}\chi(\lambda)\ \text{and}\
{_3F_2}(\lambda)_p:=\sum\limits_{\chi\in\widehat{\F_p^{\times}}}{\phi\chi\choose\chi}{\phi\chi\choose\chi} {\phi\chi\choose\chi}\chi(\lambda).
$$

Then by the work of Ono \cite{ono}, these two functions appear in the expressions of traces of Frobenius of the Legendre and Clausen elliptic curves as described below.
\begin{theorem}\label{relation-1}\cite[Theorem 1]{ono}
For $p\geq5$ and $\lambda\in\F_p\setminus\{0,1\}$ we have 
$$
{_2F_1}(\lambda)_p=\frac{-\phi(-1)}{p}a^{\Leg}_p(\lambda).
$$
\end{theorem}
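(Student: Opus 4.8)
The plan is to reduce both sides to the same quadratic character sum over $\F_p$: the left-hand side via Greene's integral representation of ${_2F_1}$, and the right-hand side via a direct point count on $E^{\Leg}_\lambda$. First I would count points. For each $x\in\F_p$ the equation $y^2=x(x-1)(x-\lambda)$ has exactly $1+\phi\bigl(x(x-1)(x-\lambda)\bigr)$ solutions $y$ (using the convention $\phi(0)=0$), so summing over $x$ and adding the point at infinity gives
$$
|E^{\Leg}_\lambda(\F_p)|=p+1+\sum_{x\in\F_p}\phi\bigl(x(x-1)(x-\lambda)\bigr).
$$
By the definition of the trace of Frobenius this yields $a^{\Leg}_p(\lambda)=-\sum_{x\in\F_p}\phi\bigl(x(x-1)(x-\lambda)\bigr)$.

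Next I would unfold the hypergeometric side. The function ${_2F_1}(\lambda)_p$ is Greene's ${_2F_1}$ with upper parameters $\phi,\phi$ and lower parameter the trivial character $\varepsilon$. Applying the finite-field integral (Euler-type) representation from \cite{greene} and simplifying the character arguments using $\phi=\bar\phi$ (since $\phi^2=\varepsilon$) together with $\varepsilon(\lambda)=1$ for $\lambda\neq0$, one obtains
$$
{_2F_1}(\lambda)_p=\frac{\phi(-1)}{p}\sum_{y\in\F_p}\phi(y)\,\phi(1-y)\,\phi(1-\lambda y).
$$
The remaining step is to match the two character sums. Substituting $x\mapsto 1/x$, which permutes $\F_p^\times$ (the terms $x=0$ and $y=0$ contribute nothing as $\phi(0)=0$), and using $\phi(y^{-3})=\phi(y)$, one checks directly that
$$
\sum_{x\in\F_p}\phi\bigl(x(x-1)(x-\lambda)\bigr)=\sum_{y\in\F_p}\phi(y)\,\phi(1-y)\,\phi(1-\lambda y).
$$
Combining the three displayed identities gives ${_2F_1}(\lambda)_p=-\frac{\phi(-1)}{p}a^{\Leg}_p(\lambda)$, as claimed.

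The main obstacle is the second step: correctly specializing Greene's integral representation and tracking the normalizing factor $\phi(-1)/p$ along with the several character evaluations that arise (the sign $BC(-1)$, the combination $\bar B C$, and $\bar A$). Once the integral form is pinned down with the correct constant, the closing substitution is a routine change of variables. I expect no difficulty in the point count or the final matching; the bookkeeping of characters in Greene's formula is where care is needed.
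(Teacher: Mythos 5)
This result is imported from Ono's paper without proof here, and your argument is correct: it is precisely the standard derivation (Greene's Euler-type integral representation of ${_2F_1}\bigl(\begin{smallmatrix}\phi,\ \phi\\ \ \varepsilon\end{smallmatrix}\mid\lambda\bigr)$, the elementary point count giving $a^{\Leg}_p(\lambda)=-\sum_x\phi\bigl(x(x-1)(x-\lambda)\bigr)$, and the substitution $x\mapsto 1/x$ matching the two quadratic character sums). The only point to watch is that the displayed definition of ${_2F_1}(\lambda)_p$ in this paper omits Greene's normalizing factor $\tfrac{p}{p-1}$ in front of the character sum, so your second step implicitly uses the Greene-normalized function --- which is what Ono's Theorem 1 actually concerns, and is evidently what is intended here.
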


\begin{theorem}\label{relation-2}\cite[Theorem 5]{ono}
For $p\geq5$ and $\lambda\in\F_p\setminus\{0,-1\}$ we have 
$$
p+p^2\phi(1+\lambda)\cdot {_3F_2}\left(\frac{\lambda}{1+\lambda}\right)_p=a^{\Cl}_p(\lambda)^2.
$$
\end{theorem}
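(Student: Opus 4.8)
The result is the finite-field incarnation of the classical Clausen formula, which expresses the square of a ${}_2F_1$ as a ${}_3F_2$; this is exactly why $E^{\Cl}_\lambda$ is named the Clausen curve, and it explains the shape of the identity: for any elliptic curve one has $a_p^2-p=\alpha^2+\alpha\beta+\beta^2$, the trace of $\mathrm{Sym}^2$ of Frobenius, where $\alpha\beta=p$, so the additive summand $p$ is forced and the ${}_3F_2$ should compute the symmetric-square trace. My plan is to reduce the statement to a character-sum identity and to lean on Theorem~\ref{relation-1}. First I would record the point count
\[
a^{\Cl}_p(\lambda)=-\sum_{x\in\F_p}\phi\bigl((x-1)(x^2+\lambda)\bigr),
\]
which follows by writing $|E^{\Cl}_\lambda(\F_p)|=p+1+\sum_x\phi((x-1)(x^2+\lambda))$ from counting the points above each $x$ together with the point at infinity. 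The key structural observation is that $a_p^2$ is invariant under quadratic twisting (a quadratic twist multiplies $a_p$ by $\pm1$), and every elliptic curve with $j\neq0,1728$ is a quadratic twist of a Legendre curve; matching $j$-invariants therefore produces a parameter $\mu=\mu(\lambda)$ with $a^{\Cl}_p(\lambda)^2=a^{\Leg}_p(\mu)^2$, and by Theorem~\ref{relation-1} the latter equals $p^2\,{}_2F_1(\mu)_p^2$. The finitely many $\lambda$ giving $j\in\{0,1728\}$ would be checked separately.

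It then remains to prove the finite-field Clausen identity
\[
p^2\,{}_2F_1(\mu)_p^2=p+p^2\,\phi(1+\lambda)\cdot{}_3F_2\!\left(\tfrac{\lambda}{1+\lambda}\right)_p .
\]
The main device is to expand everything into Jacobi sums via ${\phi\chi\choose\chi}=\frac{\chi(-1)}{p}J(\phi\chi,\overline\chi)$. On the left, $p^2\,{}_2F_1(\mu)_p^2$ unfolds into a double sum over characters $\chi,\chi'$ of a product of four Jacobi sums times $\chi\chi'(\mu)$. I would collapse this double sum to a single sum over $\chi$ by the orthogonality relations together with the Hasse--Davenport product (duplication) relation for Gauss sums, which is precisely the mechanism that reorganizes the square of the ${\phi\chi\choose\chi}$-factors into the cube ${\phi\chi\choose\chi}^3$ appearing in ${}_3F_2$. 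Converting $\mu$ into the Clausen parameter through the $j$-invariant matching is where the argument $\lambda/(1+\lambda)$ and the quadratic twist $\phi(1+\lambda)$ must emerge; the Gauss- and Jacobi-sum identities and the Gross--Koblitz formula recalled in Section~3 are the tools I would use to force these normalizing factors out exactly.

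The step I expect to be the main obstacle is the precise bookkeeping of the degenerate contributions, in particular the isolated additive constant $p$. Classically Clausen's formula is an exact identity of power series, but over $\F_p$ the orthogonality reductions leave a controlled set of boundary terms coming from the trivial character and from characters at which $J(\phi\chi,\overline\chi)$ degenerates; assembling these correctly is what produces the extra summand $p$ rather than a pure hypergeometric expression, and it is consistent with the $\mathrm{Sym}^2$ interpretation above, where $p$ is the middle eigenvalue $\alpha\beta$. Reconciling the Hasse--Davenport duplication with Greene's normalization of the binomials, so that the power $p^2$, the twist $\phi(1+\lambda)$, and the argument $\lambda/(1+\lambda)$ all come out with the correct constants, is the delicate part, whereas the orthogonality collapse and the twist-invariance reduction to the Legendre family are routine once the substitution is fixed.
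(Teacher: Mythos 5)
This statement is imported verbatim from Ono's paper (\cite[Theorem 5]{ono}) and is not proved anywhere in the present text, so there is no internal argument to compare yours against; judged on its own terms, your sketch has a genuine gap at its very first reduction. You assert that matching $j$-invariants produces a parameter $\mu\in\F_p$ with $a^{\Cl}_p(\lambda)^2=a^{\Leg}_p(\mu)^2$. Over $\F_p$ this fails for roughly half of all $\lambda$: for $j\neq 0,1728$, two curves over $\F_p$ with the same $j$-invariant are quadratic twists of one another, quadratic twisting preserves the field of definition of the $2$-torsion, and every Legendre curve has all of $E[2]$ rational; hence $E^{\Cl}_\lambda$ admits an $\F_p$-rational Legendre partner only when its $2$-torsion abscissae $1,\pm\sqrt{-\lambda}$ all lie in $\F_p$, i.e.\ only when $\phi(-\lambda)=1$. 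This is precisely why Lemma \ref{isomorphism-1} of the paper is stated only for the parameter $-\lambda^2$. When $\phi(-\lambda)=-1$ no such $\mu\in\F_p$ exists, and passing to $\F_{p^2}$ replaces $a_p$ by $a_p^2-2p$, a complication your outline does not address.

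Even on the locus where the reduction is available, what remains --- your ``finite-field Clausen identity'' relating $p^2\,{_2F_1}(\mu)_p^2$ to $p+p^2\phi(1+\lambda)\,{_3F_2}\bigl(\lambda/(1+\lambda)\bigr)_p$ --- is the entire content of the theorem, and you have only named the tools (Hasse--Davenport product relation, orthogonality, degenerate terms) rather than executed them. The collapse of the double character sum for ${_2F_1}(\mu)_p^2$, the emergence of the argument $\lambda/(1+\lambda)$ and the twist $\phi(1+\lambda)$, and the isolation of the additive constant $p$ all live exactly in the boundary-term bookkeeping you defer; this identity is a nontrivial theorem in its own right (Greene's thesis; see also \cite{EG}), and you would additionally need the transformation theory of ${_2F_1}$ under the six-fold ambiguity of the Legendre parameter, since $\mu$ is only determined up to the $S_3$-action. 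A more robust route, which sidesteps the rationality issue entirely, is to square the character sum $a^{\Cl}_p(\lambda)=-\sum_{x}\phi\bigl((x-1)(x^2+\lambda)\bigr)$ and manipulate the resulting double sum over $\F_p\times\F_p$ directly into the double-character-sum representation of ${_3F_2}$. As written, your argument is a plausible strategy outline rather than a proof, and its first step is false for half the parameter values.
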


If we use \eqref{Hasse-bound} in Theorem \ref{relation-2} then we have 

\begin{align}\label{3F2bound}
|p\cdot{_3F_2}(\lambda)_p|\leq3.
\end{align}
Furthermore, we recall the next two results that provides asymptotic moment formulas of the hypergeometric functions ${_2F_1}(\lambda)_p$ and ${_3F_2}(\lambda)_p$ in the limit of $p.$

\begin{theorem}\label{2F1-moments}\cite[Theorem 1.1]{KHN}
If $m$ is a fixed positive integer, then as $p\rightarrow\infty$ we have
$$
p^{m/2-1}\sum\limits_{\lambda\in\F_p}{_2F_1}(\lambda)_p^m=\begin{cases}
o_m(1), \ \text{if \ m \ is \ odd}\\
\frac{(2n)!}{n!(n+1)!}+o_m(1),  \  \text{if}\ m=2n\ \text{is \ even}.
\end{cases}
$$
\end{theorem}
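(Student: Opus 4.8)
The plan is to transfer the problem to the trace of Frobenius of the Legendre family and then apply the vertical equidistribution of those traces. First I would invoke Theorem~\ref{relation-1}, which for $\lambda\in\F_p\setminus\{0,1\}$ gives ${_2F_1}(\lambda)_p=\frac{-\phi(-1)}{p}a^{\Leg}_p(\lambda)$. Raising this to the $m$-th power, summing over $\lambda$, and multiplying by $p^{m/2-1}$ yields
\[
p^{m/2-1}\sum_{\lambda\in\F_p}{_2F_1}(\lambda)_p^m=(-\phi(-1))^m\,p^{-m/2-1}\sum_{\lambda\in\F_p\setminus\{0,1\}}a^{\Leg}_p(\lambda)^m+o_m(1).
\]
The two excluded values are harmless: ${_2F_1}(0)_p=0$ since $\chi(0)=0$, while ${_2F_1}(1)_p=O(p^{-1/2})$ by the finite field analogue of Gauss's summation theorem for ${_2F_1}$ at $1$ (see \cite{greene}), so after the normalization these terms contribute only $O_m(p^{-1})$. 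Since $\phi(-1)=\pm1$, the prefactor $(-\phi(-1))^m$ equals $1$ whenever $m$ is even; when $m$ is odd the leading term below vanishes, so its precise value is immaterial.

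Next I would evaluate the normalized power sums of the Frobenius traces. By the Hasse bound \eqref{Hasse-bound} I may write $a^{\Leg}_p(\lambda)=2\sqrt p\cos\theta_\lambda$ with $\theta_\lambda\in[0,\pi]$, so that
\[
p^{-m/2-1}\sum_{\lambda\in\F_p\setminus\{0,1\}}a^{\Leg}_p(\lambda)^m=2^m\cdot\frac1p\sum_{\lambda}\cos^m\theta_\lambda.
\]
Birch's theorem \cite{Birch} asserts that, as $p\to\infty$, the angles $\theta_\lambda$ become equidistributed on $[0,\pi]$ with respect to the Sato--Tate measure $\tfrac{2}{\pi}\sin^2\theta\,d\theta$. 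Since $\theta\mapsto\cos^m\theta$ is continuous and $m$ is fixed, this equidistribution gives
\[
\frac1p\sum_{\lambda}\cos^m\theta_\lambda=\frac{2}{\pi}\int_0^\pi\cos^m\theta\,\sin^2\theta\,d\theta+o_m(1).
\]

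It then remains to compute the limiting integral. For odd $m$ the integrand is antisymmetric about $\theta=\pi/2$, so the integral vanishes and the whole expression is $o_m(1)$, matching the odd case. For $m=2n$ the standard evaluation of the moments of the semicircle law gives
\[
2^{2n}\cdot\frac{2}{\pi}\int_0^\pi\cos^{2n}\theta\,\sin^2\theta\,d\theta=\frac{(2n)!}{n!(n+1)!},
\]
the $n$-th Catalan number. Assembling the three displays produces exactly the stated asymptotics, with main term $\frac{(2n)!}{n!(n+1)!}$ in the even case and $o_m(1)$ in the odd case.

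The crux of the argument is the equidistribution used in the second step, equivalently the claim that $p^{-m/2-1}\sum_{\lambda}a^{\Leg}_p(\lambda)^m$ converges to the $m$-th semicircle moment with a quantitative error $o_m(1)$. I am treating this as Birch's theorem, but a self-contained proof would have to establish precisely this moment estimate, and that is the hard part: one writes $\sum_{\lambda}a^{\Leg}_p(\lambda)^m$ as a point count on the $m$-fold fiber power of the universal Legendre curve and extracts the main term together with a power-saving remainder, either through the Eichler--Selberg trace formula (identifying the sums with traces of Hecke operators on spaces of cusp forms) or through Deligne's equidistribution theorem and the Riemann Hypothesis over finite fields. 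Controlling the error term $o_m(p^{m/2+1})$ in the unnormalized sum is the delicate point.
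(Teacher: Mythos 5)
This theorem is not proved in the paper at all: it is imported verbatim from \cite[Theorem 1.1]{KHN}, so there is no internal argument to compare yours against. Judged on its own terms, your reduction is sound and follows the same broad strategy as the cited source: Theorem \ref{relation-1} converts the sum into $(-\phi(-1))^m\,p^{-m/2-1}\sum_{\lambda\neq 0,1}a^{\Leg}_p(\lambda)^m$, the two excluded values contribute $o_m(1)$ after normalization (your appeal to Gauss summation for ${_2F_1}(1)_p$ is needed here, since the trivial bound $O(1)$ would not suffice for $m\geq 2$), and the closing computation $2^{2n}\cdot\frac{2}{\pi}\int_0^\pi\cos^{2n}\theta\,\sin^2\theta\,d\theta=\frac{(2n)!}{n!(n+1)!}$ is correct, as is the vanishing of the odd moments by the symmetry $\theta\mapsto\pi-\theta$.

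The gap --- which you flag honestly but do not close --- is the equidistribution input itself. Birch's theorem \cite{Birch} concerns the two-parameter family $y^2=x^3+ax+b$ averaged over all $(a,b)\in\F_p^2$; it does not, as stated, give the vertical Sato--Tate law for the one-parameter Legendre family, and the assertion that $p^{-m/2-1}\sum_{\lambda}a^{\Leg}_p(\lambda)^m$ converges to the $m$-th semicircle moment with error $o_m(1)$ is precisely the content of the theorem being proved. In \cite{KHN} this is established by relating the number of $\lambda$ with prescribed trace to Hurwitz class numbers (each curve with full rational $2$-torsion occurs a bounded number of times in the Legendre family) and then evaluating the resulting class-number moments via the Eichler--Selberg trace formula; alternatively one can invoke Deligne's equidistribution theorem for the Legendre sheaf, whose geometric monodromy group is $\SL_2$. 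Either route is a substantive argument rather than a citation of \cite{Birch}. So what you have written is an accurate reduction of the theorem to its genuinely hard step, not a proof of it.
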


\begin{theorem}\label{3F2-moments}\cite[Theorem 1.3]{KHN}
If $m$ is a fixed positive integer, then as $p\rightarrow\infty$ we have
$$
p^{m-1}\sum\limits_{\lambda\in\F_p}{_3F_2}(\lambda)_p^m=\begin{cases}
o_m(1), \ \text{if \ m \ is \ odd}\\
\sum\limits_{i=0}^{m}(-1)^i{m\choose i}\frac{(2i)!}{i!(i+1)!}+o_m(1),  \  \text{if \ m \ is \ even.}
\end{cases}
$$
\end{theorem}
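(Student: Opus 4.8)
\emph{Proof proposal.} The plan is to deduce the statement from the relation between $_3F_2(\lambda)_p$ and the Clausen trace of Frobenius in Theorem \ref{relation-2}. First I would perform the change of variables $t=\lambda/(1+\lambda)$, which is a bijection of $\F_p\setminus\{0,-1\}$ onto $\F_p\setminus\{0,1\}$ with inverse $\lambda=t/(1-t)$ and $1+\lambda=(1-t)^{-1}$, so that $\phi(1+\lambda)=\phi(1-t)$. Solving Theorem \ref{relation-2} for $_3F_2(t)_p$ and using $\phi(1+\lambda)^2=1$ gives the clean identity
\[
p\cdot{_3F_2}(t)_p=\phi(1+\lambda)\left(\frac{a^{\Cl}_p(\lambda)^2}{p}-1\right)=\phi(1+\lambda)\bigl(4x_\lambda^2-1\bigr),
\]
where $x_\lambda:=a^{\Cl}_p(\lambda)/(2\sqrt p)\in[-1,1]$ by the Hasse bound \eqref{Hasse-bound}. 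Thus $p^{m-1}\sum_{\lambda}{_3F_2}(\lambda)_p^m=p^{-1}\sum_{\lambda}\bigl[\phi(1+\lambda)(4x_\lambda^2-1)\bigr]^m$, the finitely many excluded values $t\in\{0,1\}$ contributing $O_m(p^{-1})$ by \eqref{3F2bound}. This rewrites the desired moment as a power moment of a normalized Clausen trace, twisted by the sign $\phi(1+\lambda)$.

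Expanding $(4x_\lambda^2-1)^m=\sum_{i=0}^m\binom{m}{i}(-1)^{m-i}4^i x_\lambda^{2i}$ and using $x_\lambda^{2i}=a^{\Cl}_p(\lambda)^{2i}/(4^i p^i)$, the whole problem reduces to the asymptotics of the twisted power sums $S_i^\varepsilon:=p^{-i-1}\sum_\lambda\phi(1+\lambda)^\varepsilon a^{\Cl}_p(\lambda)^{2i}$ for $\varepsilon\in\{0,1\}$, since $\phi(1+\lambda)^m=\phi(1+\lambda)^{m\bmod 2}$. For even $m$ only $\varepsilon=0$ occurs; the untwisted sum $S_i^0$ is precisely the $2i$-th moment in the vertical Sato--Tate law for the Clausen family, and I claim $S_i^0\to \frac{(2i)!}{i!(i+1)!}$. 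This is the Clausen analogue of the Legendre moment formula in Theorem \ref{2F1-moments}: the Clausen family is non-isotrivial, so Birch's method applies verbatim and produces the Catalan numbers as the semicircular even moments $\int_{-1}^1 x^{2i}\tfrac{2}{\pi}\sqrt{1-x^2}\,dx=\frac{1}{4^i}\frac{(2i)!}{i!(i+1)!}$. Substituting this collapses the expansion to $\sum_{i=0}^m(-1)^{m-i}\binom{m}{i}\frac{(2i)!}{i!(i+1)!}$, which for even $m$ is exactly $\sum_{i=0}^m(-1)^i\binom{m}{i}\frac{(2i)!}{i!(i+1)!}$, the claimed even-moment formula.

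For odd $m$ the factor $\phi(1+\lambda)$ survives and the statement amounts to showing every twisted sum $S_i^1\to 0$; this is the main obstacle. I would attack it through the diagonal/off-diagonal dichotomy underlying $S_i^0$: writing $a^{\Cl}_p(\lambda)=-\sum_x\phi\bigl((x-1)(x^2+\lambda)\bigr)$ and expanding $a^{\Cl}_p(\lambda)^{2i}$, interchanging summation leaves inner sums $\sum_\lambda\phi\bigl((1+\lambda)\prod_{k=1}^{2i}(x_k^2+\lambda)\bigr)$. In the untwisted sum $S_i^0$ the main term comes entirely from the diagonal configurations where $\prod_k(x_k^2+\lambda)$ is a perfect square in $\lambda$ (the $x_k$ pairing up to sign), for which $\sum_\lambda\phi(\text{square})\sim p$. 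Inserting the extra factor $(1+\lambda)$ raises the polynomial to odd degree $2i+1$, so it is never a perfect square; every former diagonal contribution is therefore reduced from $\sim p$ to $O_i(\sqrt p)$ by the Weil bound, and the diagonal total drops to $O_i(p^{i+1/2})=o(p^{i+1})$. Since the off-diagonal terms are governed by the same cancellation that already forces them to be $o(p^{i+1})$ in the convergent sum $S_i^0$---a bound unaffected by inserting the bounded twist $\phi(1+\lambda)$---one concludes $S_i^1=o(1)$, hence the odd moments are $o_m(1)$. Probabilistically this is the statement that the sign $\phi(1+\lambda)$ becomes asymptotically independent of the normalized trace $x_\lambda$, symmetrizing the limiting law; making the off-diagonal control uniform under the twist is the delicate point I expect to require the most care.
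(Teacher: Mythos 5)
The paper itself gives no proof of this statement: it is imported verbatim as \cite[Theorem 1.3]{KHN}, so the relevant comparison is with the argument of that reference, whose key ingredients (the Clausen moments and the quadratic-twist trick) this paper reproduces in Section 2. Your even-moment half is sound and follows essentially that route: invert Theorem \ref{relation-2}, note $\phi(1+\lambda)^m=1$, expand $\bigl(a^{\Cl}_p(\lambda)^2/p-1\bigr)^m$ binomially, and feed in $\sum_\lambda a^{\Cl}_p(\lambda)^{2i}=\frac{(2i)!}{i!(i+1)!}p^{i+1}+o_i(p^{i+1})$, which is exactly Lemma \ref{clausen-moment-lemma} of this paper (citable, though ``Birch's method applies verbatim'' is glib: for the Clausen family one goes through the twist relation to the Legendre family or through Katz's equidistribution).

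The genuine gap is in the odd case, i.e.\ in your claim that $S_i^1=p^{-i-1}\sum_\lambda\phi(1+\lambda)a^{\Cl}_p(\lambda)^{2i}\to0$. Your mechanism --- expand $a^{\Cl}_p(\lambda)^{2i}$ into $p^{2i}$ inner sums $\sum_\lambda\phi\bigl((1+\lambda)\prod_k(x_k^2+\lambda)\bigr)$, kill the former diagonal by Weil because the extra factor makes the $\lambda$-polynomial of odd degree, and assert that the off-diagonal ``is governed by the same cancellation'' as in the untwisted sum --- does not constitute a proof, because no such elementary cancellation argument exists even for the untwisted sum. Termwise Weil bounds give the off-diagonal only $O_i(p^{2i+1/2})$, which dwarfs the target $o(p^{i+1})$ for every $i\geq1$; and the naive diagonal count in the untwisted case yields $\tfrac{(2i)!}{i!}\,p^{i+1}$ rather than the correct Catalan constant $\tfrac{(2i)!}{i!(i+1)!}\,p^{i+1}$, which shows the diagonal/off-diagonal picture with fiberwise Weil bounds does not even recover $S_i^0$. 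The untwisted moments come from the Eichler--Selberg trace formula (Birch) or Deligne--Katz equidistribution, and there is no a priori reason the global cancellation they encode survives the insertion of $\phi(1+\lambda)$: that survival is precisely the content of the odd-moment statement. The way this is actually handled (see the odd case of Theorem \ref{Frobenius-Moments} in this paper, and the corresponding step in \cite{KHN}) is to remove the quadratic twist by the substitution $\lambda\mapsto\lambda^2$, writing $\sum_\lambda\phi(\cdot)F(\lambda)=\sum_\lambda F(\lambda^2)-\sum_\lambda F(\lambda)$, and then to evaluate the sum over squares via the explicit isomorphism of $E^{\Cl}_{-\lambda^2}$ with a quadratic twist of a Legendre curve (Lemma \ref{isomorphism-1}), reducing everything to known untwisted Legendre and Clausen moments as in Proposition \ref{moment-proposition}. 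Replacing your off-diagonal heuristic with that substitution is what closes the argument.
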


To this end, we discuss about the fact that the elliptic curve $E^{\Cl}_{-\lambda^2}$ is a quadratic twist of the elliptic curve  $E^{\Leg}_{\alpha},$ where $\alpha=\frac{2\lambda}{\lambda-1}.$
\begin{lemma}\label{isomorphism-1}
For $\lambda\in\F_p\backslash\{0,\pm1\},$ the elliptic curves $E^{\Cl}_{-\lambda^2}: \  y^2=(x-1)(x^2-\lambda^2)$ and $E^{\Leg}_{\alpha}:\  y^2=x(x-1)(x-\alpha),$ where $\alpha=\frac{2\lambda}{\lambda-1}$ are isomorphic over $\F_p$ or $\F_{p^2}.$
\end{lemma}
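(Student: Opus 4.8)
The plan is to exhibit an explicit admissible change of variables carrying one Weierstrass equation onto the other, and then to read off the field of definition from the scalar that appears. First I would rewrite the Clausen curve in factored form: since $x^2-\lambda^2=(x-\lambda)(x+\lambda)$, we have
\[
E^{\Cl}_{-\lambda^2}:\ y^2=(x-1)(x-\lambda)(x+\lambda),
\]
so its three branch points are $1,\lambda,-\lambda$, which are pairwise distinct precisely because $\lambda\in\F_p\setminus\{0,\pm1\}$ (in characteristic $>3$).

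Next I would apply the substitution $x=(1-\lambda)X+\lambda$, $y=u^3Y$ with $u^2=1-\lambda$, designed to send the root $\lambda$ to $0$ and the root $1$ to $1$. Under $x=(1-\lambda)X+\lambda$ the three linear factors become
\[
x-\lambda=(1-\lambda)X,\qquad x-1=(1-\lambda)(X-1),\qquad x+\lambda=(1-\lambda)\Bigl(X-\tfrac{2\lambda}{\lambda-1}\Bigr),
\]
so the right-hand side equals $(1-\lambda)^3\,X(X-1)(X-\alpha)$ with $\alpha=\frac{2\lambda}{\lambda-1}$. Dividing through by $u^6=(1-\lambda)^3$ then yields $Y^2=X(X-1)(X-\alpha)$, which is exactly $E^{\Leg}_{\alpha}$; one checks $\alpha\neq0,1$ using $\lambda\neq0,-1$, so the target is a genuine Legendre curve. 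This map is an isomorphism of elliptic curves because it is an admissible Weierstrass change of variables, namely $(x,y)=(u^2X+r,u^3Y)$ with $r=\lambda$.

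The only delicate point, and the reason for the disjunction in the statement, is the field over which $u=\sqrt{1-\lambda}$ lives. If $1-\lambda$ is a nonzero square in $\F_p$ (equivalently $\phi(1-\lambda)=1$), then $u\in\F_p^\times$ and the displayed map is defined over $\F_p$, giving an $\F_p$-isomorphism. If $1-\lambda$ is a non-square, then $u\in\F_{p^2}\setminus\F_p$, so the map is defined only over $\F_{p^2}$; in that case the two curves are quadratic twists of one another over $\F_p$ and become isomorphic after base change to $\F_{p^2}$. I do not expect any genuine obstacle beyond bookkeeping: the substitution is essentially forced once the roots are matched, and checking that the third factor produces exactly $\alpha=\frac{2\lambda}{\lambda-1}$ is a one-line computation.
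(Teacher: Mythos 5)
Your proposal is correct and follows essentially the same route as the paper: the paper's proof applies exactly the substitution $x\mapsto(1-\lambda)x+\lambda$, $y\mapsto(1-\lambda)^{3/2}y$ and notes that the curves are isomorphic or quadratic twists according to whether $\sqrt{1-\lambda}$ lies in $\F_p$. Your write-up just carries out the factor-by-factor computation and the square/non-square case distinction explicitly, which the paper leaves implicit.
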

\begin{proof}
If we consider the elliptic curve $E^{\Cl}_{-\lambda^2}: \ y^2=(x-1)(x-\lambda^2),$ then by taking the transformations $x\mapsto(1-\lambda)x+\lambda$ and $y\mapsto(1-\lambda)^{3/2}y$ we obtain that $E^{\Cl}_{-\lambda^2}$ is either isomorphic to or a quadratic twist of the elliptic curve
$E^{\Leg}_{\alpha}.$
\end{proof}
\subsection{Moments of Frobenius of traces of Clausen elliptic curves}
\begin{lemma}\label{clausen-moment-lemma}
For a positive integer $j,$ as $p\rightarrow\infty$ we have 
$$
\sum\limits_{\lambda\in\F_p\setminus\{0,-1\}}(a_p^{\Cl}(\lambda))^{2j}=\frac{(2j)!p^{j+1}}{j!(j+1)!}+o_j(p^{j+1}).
$$
\end{lemma}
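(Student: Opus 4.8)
The plan is to start from Theorem \ref{relation-2}, which expresses the square of the Clausen trace as a value of ${}_3F_2$. Writing $W(\lambda):=p^2\phi(1+\lambda)\,{}_3F_2\!\left(\frac{\lambda}{1+\lambda}\right)_p$, the relation reads $(a_p^{\Cl}(\lambda))^2=p+W(\lambda)$, so $(a_p^{\Cl}(\lambda))^{2j}=(p+W(\lambda))^{j}$. Expanding by the binomial theorem and summing over $\lambda$, the task reduces to evaluating, for $0\le k\le j$, the sums $\sum_\lambda W(\lambda)^k=p^{2k}\sum_\lambda\phi(1+\lambda)^k\,{}_3F_2\!\left(\frac{\lambda}{1+\lambda}\right)_p^{\,k}$ and assembling them with weights $\binom{j}{k}p^{j-k}$. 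For even $k$ one has $\phi(1+\lambda)^k=1$ for $\lambda\neq-1$, and $\mu=\frac{\lambda}{1+\lambda}$ is a bijection of $\F_p\setminus\{0,-1\}$ onto $\F_p\setminus\{0,1\}$; hence $\sum_\lambda W(\lambda)^k=p^{2k}\sum_{\mu}{}_3F_2(\mu)_p^{\,k}+O(1)$, and Theorem \ref{3F2-moments} evaluates this as $p^{k+1}(C_k+o(1))$ with $C_k:=\sum_{i=0}^k(-1)^i\binom{k}{i}\frac{(2i)!}{i!(i+1)!}$, the removed points $\mu=0,1$ costing only $O(p^{-k})$ by \eqref{3F2bound}. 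This contributes $\binom{j}{k}C_k\,p^{j+1}$ to the moment.

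The odd values of $k$ are the crux, because the factor $\phi(1+\lambda)$ survives the substitution. I would expand ${}_3F_2(\mu)_p=\sum_\chi\binom{\phi\chi}{\chi}^3\chi(\mu)$ and interchange summations, whereupon the inner $\lambda$-sum is $\sum_\lambda\phi(1+\lambda)(\chi_1\cdots\chi_k)\!\left(\frac{\lambda}{1+\lambda}\right)$; factoring $\psi\!\left(\frac{\lambda}{1+\lambda}\right)=\psi(\lambda)\overline\psi(1+\lambda)$ with $\psi=\chi_1\cdots\chi_k$ and substituting $v=1/(1+\lambda)$ identifies this cleanly with the Jacobi sum $J(\phi,\chi_1\cdots\chi_k)$. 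Thus $\sum_\lambda\phi(1+\lambda)\,{}_3F_2^{\,k}=\sum_{\chi_1,\dots,\chi_k}\prod_{i=1}^k\binom{\phi\chi_i}{\chi_i}^3\,J(\phi,\chi_1\cdots\chi_k)$. Rewriting every binomial and Jacobi sum through Gauss sums and invoking the product formulas for $p$-adic gamma functions and the Gross–Koblitz formula (to be recalled in Section~3), I would isolate the diagonal character configurations contributing the main term and bound the off-diagonal tuples by the Weil estimate $|J|\le\sqrt p$. The expected outcome is $\sum_\lambda\phi(1+\lambda)\,{}_3F_2^{\,k}=p^{1-k}(-C_k+o(1))$ for odd $k$, giving a contribution $\binom{j}{k}(-C_k)p^{j+1}$. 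As a template, the case $k=1$ can be checked outright: the algebraic identity $\binom{\phi\chi}{\chi}^3J(\phi,\chi)=\tfrac{1}{p^2}J(\phi\chi,\overline\chi)^2$ collapses the sum to $\tfrac{1}{p^2}\sum_\chi J(\phi\chi,\overline\chi)^2=-\phi(-1)(p-1)/p^2=o(1)$, in agreement with $-C_1=0$.

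Combining the two parities, the $2j$-th moment equals $p^{j+1}\sum_{k=0}^j\binom{j}{k}(-1)^kC_k+o_j(p^{j+1})$, since the even terms carry $C_k$ and the odd terms carry $-C_k=(-1)^kC_k$. The argument then closes with the elementary Catalan identity $\sum_{k=0}^j\binom{j}{k}(-1)^kC_k=\frac{(2j)!}{j!(j+1)!}$, which follows by expanding $C_k$, interchanging the order of summation, and using $\sum_{k}(-1)^{k}\binom{j}{k}\binom{k}{i}=(-1)^{i}[\,i=j\,]$, so that only the term $i=j$ survives and yields $\frac{(2j)!}{j!(j+1)!}$. The main obstacle is exactly the odd-$k$ step: in contrast with Theorem \ref{3F2-moments}, the surviving character $\phi(1+\lambda)$ replaces the orthogonality relation by a genuine Jacobi sum $J(\phi,\chi_1\cdots\chi_k)$, so the entire main-term bookkeeping of \cite{KHN} must be redone with this extra twist, and one must verify that the twisted Gauss-sum products cancel down to the clean value $-C_k$.

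As an independent check on the leading constant, Lemma \ref{isomorphism-1} pins down half of the answer without the twisted analysis: for $\lambda\in\F_p\setminus\{0,\pm1\}$ it gives $(a_p^{\Cl}(-\lambda^2))^{2}=(a_p^{\Leg}(\alpha))^{2}$ with $\alpha=\frac{2\lambda}{\lambda-1}$, and since $\lambda\mapsto\alpha$ is a M\"obius bijection onto $\F_p\setminus\{0,1,2\}$, Theorems \ref{relation-1} and \ref{2F1-moments} yield $\sum_{\lambda}(a_p^{\Cl}(-\lambda^2))^{2j}=\sum_{\alpha}(a_p^{\Leg}(\alpha))^{2j}+o(p^{j+1})=\frac{(2j)!}{j!(j+1)!}p^{j+1}+o(p^{j+1})$. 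As each value $-\lambda^2$ is attained twice, this shows the Clausen parameters with $-\lambda$ a nonzero square already account for $\tfrac12\frac{(2j)!}{j!(j+1)!}p^{j+1}$, exactly half of the asserted total; the remaining half, coming from the non-square parameters, is supplied by the $\phi$-twisted computation above.
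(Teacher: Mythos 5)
Your strategy --- expanding $(a_p^{\Cl}(\lambda))^{2j}=(p+W(\lambda))^{j}$ via Theorem \ref{relation-2} and reducing to moments of ${}_3F_2$ --- is genuinely different from the paper, which simply invokes Proposition 3.2 and Lemmas 5.4, 5.6 of \cite{KHN} (where these Clausen moments are obtained by counting isomorphism classes of elliptic curves and appealing to class-number/Eichler--Selberg type results; indeed, in \cite{KHN} the ${}_3F_2$ moment formula is \emph{deduced from} the Clausen moments, so your argument runs the implication in the reverse direction). The even-$k$ part of your expansion, the final Catalan identity $\sum_{k}(-1)^k\binom{j}{k}C_k=\frac{(2j)!}{j!(j+1)!}$, and the coset computation in your last paragraph are all correct.

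However, there is a genuine gap at exactly the point you flag: the assertion that for odd $k$
$$\sum_{\lambda}\phi(1+\lambda)\,{}_3F_2\!\left(\tfrac{\lambda}{1+\lambda}\right)_p^{\,k}=p^{1-k}\bigl(-C_k+o(1)\bigr)$$
is stated as an ``expected outcome,'' not proved. This is not a removable technicality: these twisted odd moments carry a nonvanishing main term in general (already for $j=3$ the $k=3$ term must contribute $-C_3\,p^{j+1}=+p^{4}$, without which your total would be $4p^4$ instead of the correct $5p^4$), so they cannot be swept into the error term, and Theorem \ref{3F2-moments} says nothing about them because of the surviving factor $\phi(1+\lambda)=\phi(1-\mu)$. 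Your only verification is $k=1$, which is degenerate since $C_1=0$; establishing the general case would require redoing the entire main-term analysis of \cite{KHN} with the extra Jacobi-sum twist $J(\phi,\chi_1\cdots\chi_k)$, which is essentially a new result of the same difficulty as the lemma itself. The ``independent check'' in your final paragraph does not close the gap either: the substitution $\lambda\mapsto-\lambda^2$ only reaches the coset $-(\F_p^{\times})^{2}$ of parameters, and the Clausen curves over the complementary coset admit no analogous Legendre-curve reduction, so their contribution is again deferred to the unproven twisted computation. As it stands the argument establishes only half of the claimed main term rigorously.
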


\begin{proof}
The result can be derived by using Proposition 3.2 of \cite{KHN}, Lemma 5.4 of \cite{KHN} and Lemma 5.6 of \cite{KHN}.
\end{proof}

\begin{proposition}\label{moment-proposition} 
For a positive integer $j,$ as $p\rightarrow\infty$ we have 
$$
\sum\limits_{\substack{\lambda\in\F_p\setminus\{0\}\\  \lambda^2\neq-1}}(a_p^{\Cl}(\lambda^2))^{2j}=\frac{(2j)!p^{j+1}}{j!(j+1)!}+o_j(p^{j+1}).
$$

\end{proposition}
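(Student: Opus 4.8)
The plan is to reduce the proposition to Lemma~\ref{clausen-moment-lemma} by a quadratic‑residue device that isolates a single quadratically twisted moment as the only error to control. Since the map $\lambda\mapsto\lambda^2$ is two‑to‑one from $\F_p\setminus\{0\}$ onto the nonzero squares, and since $\tfrac{1+\phi(\mu)}{2}$ is the indicator that $\mu\in\F_p^\times$ is a square, I would first rewrite
\begin{align}
\sum_{\substack{\lambda\in\F_p\setminus\{0\}\\ \lambda^2\neq-1}}(a_p^{\Cl}(\lambda^2))^{2j}
=\sum_{\substack{\mu\in\F_p\setminus\{0\}\\ \mu\neq-1}}(1+\phi(\mu))(a_p^{\Cl}(\mu))^{2j}
=S_1+S_2,\notag
\end{align}
where $S_1:=\sum_{\mu\neq0,-1}(a_p^{\Cl}(\mu))^{2j}$ and $S_2:=\sum_{\mu\neq0,-1}\phi(\mu)(a_p^{\Cl}(\mu))^{2j}$. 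By Lemma~\ref{clausen-moment-lemma}, $S_1=\tfrac{(2j)!p^{j+1}}{j!(j+1)!}+o_j(p^{j+1})$, which already furnishes the full main term. Thus the entire task is to prove the twisted moment is negligible, $S_2=o_j(p^{j+1})$.

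To attack $S_2$ I would use the standard point‑count formula $a_p^{\Cl}(\mu)=-\sum_{x\in\F_p}\phi((x-1)(x^2+\mu))$, expand the $2j$‑th power, and interchange the order of summation, obtaining
\begin{align}
S_2=\sum_{x_1,\dots,x_{2j}\in\F_p}\Bigl(\prod_{i=1}^{2j}\phi(x_i-1)\Bigr)\sum_{\mu\in\F_p}\phi\Bigl(\mu\prod_{i=1}^{2j}(\mu+x_i^2)\Bigr)+O_j(1),\notag
\end{align}
the $O_j(1)$ absorbing the removed values $\mu=0,-1$ (whose contribution factorizes into one‑variable sums of size $O(1)$). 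The inner sum is a one‑variable character sum whose polynomial argument $\mu\prod_i(\mu+x_i^2)$ has \emph{odd} degree $2j+1$, so it is never a constant multiple of a perfect square and Weil's bound gives $O_j(\sqrt p)$ for every tuple. The decisive observation is the behaviour on the diagonal tuples, namely those for which $\prod_i(\mu+x_i^2)=R(\mu)^2$ is itself a perfect square: these are precisely the tuples that produce the leading term of $S_1$, yet here the extra factor $\phi(\mu)$ collapses the inner sum to $\sum_{\mu:\,R(\mu)\neq0}\phi(\mu)=-\sum_{\mu:\,R(\mu)=0}\phi(\mu)=O_j(1)$. Since there are only $O_j(p^{j})$ diagonal tuples, their total contribution to $S_2$ is $O_j(p^{j})=o_j(p^{j+1})$; the twist has annihilated the main term.

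The hard part is the off‑diagonal range, where $\prod_i(\mu+x_i^2)$ is not a square: there the Weil estimate $O_j(\sqrt p)$ alone is far too weak, as it must be summed over $\sim p^{2j}$ tuples. Exactly as in the untwisted moment, the required saving comes not from the inner $\mu$‑sum but from cancellation in the outer sum over $x_1,\dots,x_{2j}$ carried by $\prod_i\phi(x_i-1)$. I would secure this by running the same character‑sum analysis that underlies Lemma~\ref{clausen-moment-lemma} (Proposition 3.2 and Lemmas 5.4 and 5.6 of \cite{KHN}), now carrying the additional character $\phi(\mu)$ throughout. Equivalently, via Theorem~\ref{relation-2} one may substitute $(a_p^{\Cl}(\mu))^{2j}=(p+p^2\phi(1+\mu)\,{_3F_2}(\mu/(1+\mu))_p)^j$, change variables to $y=\mu/(1+\mu)$, and reduce to the twisted power moments $\sum_{y}\phi(y)\phi(1-y)^{k+1}\,{_3F_2}(y)_p^{\,k}$ for $0\le k\le j$; expanding ${_3F_2}$ into binomials shows that the twist by $\phi$ shifts the governing orthogonality relation from $\chi_1\cdots\chi_k=\varepsilon$ to $\chi_1\cdots\chi_k=\phi$, which removes the main term while leaving the size estimates behind Theorem~\ref{3F2-moments} intact, so that each such moment is $o_j(p^{1-k})$ and every term is $o_j(p^{j+1})$. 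Combining the diagonal and off‑diagonal estimates yields $S_2=o_j(p^{j+1})$, and together with the value of $S_1$ this proves the proposition.
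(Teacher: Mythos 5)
Your reduction is set up correctly: writing the left-hand side as $\sum_{\mu\neq 0,-1}(1+\phi(\mu))(a_p^{\Cl}(\mu))^{2j}=S_1+S_2$, getting the full main term from $S_1$ via Lemma \ref{clausen-moment-lemma}, and observing that the proposition is then equivalent to $S_2=o_j(p^{j+1})$ is a valid strategy, and the statement $S_2=o_j(p^{j+1})$ is in fact true. The problem is that you never prove it. Your treatment of the diagonal tuples and of the excluded values $\mu=0,-1$ is fine, but, as you yourself concede, the whole difficulty sits in the off-diagonal range, and there your argument consists of the assertion that one can rerun the analysis behind Lemma \ref{clausen-moment-lemma} ``carrying the additional character $\phi(\mu)$ throughout,'' or equivalently that the twist shifts the orthogonality condition from $\chi_1\cdots\chi_k=\varepsilon$ to $\chi_1\cdots\chi_k=\phi$ while ``leaving the size estimates behind Theorem \ref{3F2-moments} intact.'' That is a restatement of what must be shown, not a proof: the untwisted moments $\sum_y{_3F_2}(y)_p^{\,k}$ carry a genuine main term of size $p^{1-k}$, and establishing that $\sum_y\phi(y)\phi(1-y)^{k+1}{_3F_2}(y)_p^{\,k}=o_j(p^{1-k})$ means beating the trivial bound coming from $|p\cdot{_3F_2}|\le 3$ by redoing the multi-character-sum computation of \cite{KHN} with the extra $\phi$; none of that bookkeeping is carried out, and it is exactly where the content of the proposition lies.

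The paper closes this gap by a different and more elementary device, which you could adopt. It splits on $p\bmod 4$. When $p\equiv 1\pmod 4$ one has $\{\lambda^2:\lambda\neq0\}=\{-\lambda^2:\lambda\neq0\}$, and Lemma \ref{isomorphism-1} identifies $E^{\Cl}_{-\lambda^2}$ with a quadratic twist of $E^{\Leg}_{2\lambda/(\lambda-1)}$, so $(a_p^{\Cl}(-\lambda^2))^{2j}=(a_p^{\Leg}(2\lambda/(\lambda-1)))^{2j}$; a fractional-linear change of variable then reduces the sum to the known semicircle moments of ${_2F_1}$ (Theorem \ref{2F1-moments}), which yield the Catalan main term directly. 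When $p\equiv 3\pmod 4$ the nonzero squares and the elements $-\lambda^2$ partition $\F_p^{\times}$, so $\sum_{\lambda}(a_p^{\Cl}(\lambda^2))^{2j}=2\sum_{\mu}(a_p^{\Cl}(\mu))^{2j}-\sum_{\lambda}(a_p^{\Cl}(-\lambda^2))^{2j}$, and both sums on the right are evaluated (the first by Lemma \ref{clausen-moment-lemma}, the second again via the Legendre twist), giving $2C-C=C$. In other words, the paper never estimates a twisted character sum: it computes the non-square half of the moment exactly to leading order using the quadratic-twist isomorphism, which is precisely the input your sketch is missing. If you substitute this Legendre-curve computation for your off-diagonal paragraph, your decomposition into $S_1+S_2$ goes through.
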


\begin{proof}
We prove the result by considering the following cases: First suppose that $p\equiv1\pmod{4}.$ Now, taking the transformation $\lambda^2\rightarrow-\lambda^2,$
we may write

\begin{align}\label{eqn-9}
\sum\limits_{\substack{\lambda\in\F_p\setminus\{0\}\\  \lambda^2\neq-1}}a_p^{\Cl}(\lambda^2)^{2j}=\sum\limits_{\lambda\in\F_p\setminus\{0,\pm1\}}a_p^{\Cl}(-\lambda^2)^{2j}.
\end{align}

\noindent Using Lemma \ref{isomorphism-1} for $\lambda\neq0,\pm1,$ we have that 
$$|a_p^{\Cl}(-\lambda^2)|=|a_p^{\Leg}(\beta)|,\  \text{where} \ \beta=\frac{2\lambda}{\lambda-1}.$$
By making use of this relation in \eqref{eqn-9} we write

\begin{align}\label{eqn-10}
\sum\limits_{\substack{\lambda\in\F_p\setminus\{0\}\\  \lambda^2\neq-1}}(a_p^{\Cl}(\lambda^2))^{2j}=\sum\limits_{\lambda\in\F_p\setminus\{0,\pm1\}}\left(a_p^{\Leg}\left(\frac{2\lambda}{\lambda-1}\right)\right)^{2j}.
\end{align}
By taking the transformation $\lambda\mapsto\frac{\lambda}{\lambda-2}$ on the right side of \eqref{eqn-10} and using Theorem \ref{2F1-moments}, Theorem \ref{relation-1} and the Hasse bound \eqref{Hasse-bound} we conclude the result. 

Now, suppose that $p\equiv3\pmod{4}.$ Then separating squares and non-squares, we have
$$
\sum\limits_{\lambda\in\F_p\setminus\{0,-1\}}(a_p^{\Cl}(\lambda))^{2j}=\frac{1}{2}\sum\limits_{\lambda\in\F_p\setminus\{0\}}(a_p^{\Cl}(\lambda^2))^{2j}+\frac{1}{2}\sum\limits_{\lambda\in\F_p\setminus\{0,\pm1\}}(a_p^{\Cl}(-\lambda^2))^{2j}.
$$
This gives
\begin{align}\label{eqn-11}
\sum\limits_{\lambda\in\F_p\setminus\{0\}}(a_p^{\Cl}(\lambda^2))^{2j}=2\sum\limits_{\lambda\in\F_p\setminus\{0,-1\}}(a_p^{\Cl}(\lambda))^{2j}-\sum\limits_{\lambda\in\F_p\setminus\{0,\pm1\}}(a_p^{\Cl}(-\lambda^2))^{2j}.
\end{align}
Again using Lemma \ref{isomorphism-1} we have that 
$$
\sum\limits_{\lambda\in\F_p\setminus\{0,\pm1\}}(a_p^{\Cl}(-\lambda^2))^{2j}=\sum\limits_{\lambda\in\F_p\setminus\{0,\pm1\}}\left(a_p^{\Leg}\left(\frac{2\lambda}{\lambda-1}\right)\right)^{2j}.
$$
Hence, applying similar arguments as before in the case $p\equiv1\pmod{4},$ we have 

\begin{align}\label{eqn-12}
\sum\limits_{\lambda\in\F_p\setminus\{0,\pm1\}}(a_p^{\Cl}(-\lambda^2))^{2j}=\frac{(2j)!p^{j+1}}{j!(j+1)!}+o_j(p^{j+1})
\end{align}
as $p\rightarrow\infty.$ Finally, applying Lemma \ref{clausen-moment-lemma} on the first summation of the right side of \eqref{eqn-11} and using \eqref{eqn-12} on the second summation of the right side of \eqref{eqn-11} we conclude the result.

\end{proof}

\section{Character sums and $p$-adic gamma functions}
We start this section by discussing certain facts about multiplicative characters. For instance, the following result provides orthogonality relation of multiplicative characters.
\begin{lemma}\cite[Chapter 8]{ireland}
Let $p$ be an odd prime. Then we have
\begin{align}\label{orthogonal-1}
\sum_{\chi\in\widehat{\mathbb{F}_p^\times}}\chi(x)=\left\{
   \begin{array}{ll}
    p-1 , & \hbox{if $x=1$;} \\
  0, & \hbox{if $x\neq1$.}
   \end{array}
 \right.
\end{align}
\end{lemma}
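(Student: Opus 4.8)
The plan is to exploit the group structure of $\widehat{\mathbb{F}_p^\times}$ together with the standard separation property of multiplicative characters. Recall that $\mathbb{F}_p^\times$ is cyclic of order $p-1$, so its dual group $\widehat{\mathbb{F}_p^\times}$ is again of order $p-1$; this count of characters is precisely what supplies the value $p-1$ in the first case. Concretely, if $g$ is a fixed generator of $\mathbb{F}_p^\times$ and $\zeta$ is a primitive $(p-1)$-th root of unity, then the assignments $\chi(g^j)=\zeta^{aj}$ for $0\le a\le p-2$ run over all $p-1$ characters.

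First I would dispose of the case $x=1$. Since every $\chi$ is a group homomorphism, $\chi(1)=1$ for all $\chi\in\widehat{\mathbb{F}_p^\times}$, and there are exactly $p-1$ such characters, so the sum equals $p-1$.

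Next, for $x\neq 1$ I would treat the subcase $x=0$ trivially: by the convention $\chi(0)=0$, every summand vanishes, so the sum is $0$. For $x\in\mathbb{F}_p^\times$ with $x\neq 1$, the key step is to produce a single character $\psi$ with $\psi(x)\neq 1$. Writing $x=g^k$ with $1\le k\le p-2$, the generating character $\psi$ defined by $\psi(g)=\zeta$ satisfies $\psi(x)=\zeta^k\neq 1$. Setting $S=\sum_{\chi}\chi(x)$ and multiplying by $\psi(x)$, I would use that $\chi\mapsto\psi\chi$ is a bijection of $\widehat{\mathbb{F}_p^\times}$ onto itself to rewrite
$$
\psi(x)\,S=\sum_{\chi}\psi(x)\chi(x)=\sum_{\chi}(\psi\chi)(x)=\sum_{\chi}\chi(x)=S.
$$
Hence $(\psi(x)-1)S=0$, and since $\psi(x)\neq 1$ this forces $S=0$.

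There is essentially no genuine obstacle here: the only content is the existence of a separating character, which is immediate from the cyclicity of $\mathbb{F}_p^\times$ (equivalently, from the fact that a finite abelian group is canonically isomorphic to its double dual, so its characters separate points). The multiplication-by-$\psi$ step is the usual averaging argument establishing orthogonality of characters, here specialized to the case where the second character is trivial.
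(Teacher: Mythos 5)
Your proof is correct and is exactly the standard orthogonality argument found in the cited reference (Ireland--Rosen, Chapter 8), which the paper invokes without reproducing a proof: $\chi(1)=1$ for all $p-1$ characters gives the first case, and for $x\neq 1$ the existence of a separating character $\psi$ together with the bijection $\chi\mapsto\psi\chi$ forces the sum to vanish. Your handling of $x=0$ via the convention $\chi(0)=0$ is also consistent with the paper's stated conventions.
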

\noindent Let $\zeta_p$ denote a fixed primitive $p$-th root of unity. Then, for a multiplicative character
 $\chi$ of $\mathbb{F}_p,$ the Gauss sum is defined by
\begin{align}
g(\chi):=\sum\limits_{x\in \mathbb{F}_p}\chi(x)~\zeta_p^x.\notag
\end{align}
It is easy to verify that $g(\varepsilon)=-1$. For $p$ odd prime, multiplicative character $\chi\in\widehat{\F_p^{\times}}$ and $x\in\F_p,$ we define the following functions:

\begin{align}
\Delta(p):&=\begin{cases}
1,\ \text{if}\ p\equiv1\pmod{4},\\
0,\ \text{if}\ p\equiv3\pmod{4}.
\end{cases}, ~
\delta(\chi):&=\begin{cases}
1,\ \text{if}\ \chi=\varepsilon,\\
0,\ \text{if}\ \chi\neq\varepsilon.
\end{cases} \ \text{and}\
\delta(x):&=\begin{cases}
1,\ \text{if}\ x=0,\\
0,\ \text{if}\ x\neq0.
\end{cases}\notag
\end{align}

\noindent The following product of Gauss sums is going to be used in most of our calculations.
\begin{lemma}\cite[eq. 1.12]{greene} For $\chi\in \widehat{\mathbb{F}_p^\times},$ we have
\begin{align}\label{inverse}
g(\chi)g(\overline{\chi})=p\chi(-1)-(p-1)\delta(\chi).
\end{align}
\end{lemma}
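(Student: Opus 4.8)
The plan is to prove the identity by expanding the product of the two Gauss sums into a double sum and evaluating it through a change of variables. First I would dispose of the trivial character separately: when $\chi=\varepsilon$, the convention $\chi(0)=0$ gives $g(\varepsilon)=\sum_{x\in\F_p^\times}\zeta_p^x=-1$, so $g(\varepsilon)g(\overline{\varepsilon})=1$, while the right-hand side equals $p\cdot1-(p-1)\cdot1=1$ as well. It then remains to treat $\chi\neq\varepsilon$, where $\delta(\chi)=0$ and the claim reduces to $g(\chi)g(\overline{\chi})=p\chi(-1)$.

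For $\chi\neq\varepsilon$, I would write the product as a double sum over the nonzero field elements (using $\chi(0)=\overline\chi(0)=0$),
\begin{align}
g(\chi)g(\overline{\chi})=\sum_{x,y\in\F_p^\times}\chi(x)\overline{\chi}(y)\,\zeta_p^{x+y},\notag
\end{align}
and then, for each fixed $y\in\F_p^\times$, substitute $x=ty$ with $t$ ranging over $\F_p^\times$. Since $\chi(ty)\overline{\chi}(y)=\chi(t)\chi(y)\overline{\chi}(y)=\chi(t)$ and $\zeta_p^{x+y}=\zeta_p^{y(t+1)}$, this recasts the expression as
\begin{align}
g(\chi)g(\overline{\chi})=\sum_{t\in\F_p^\times}\chi(t)\sum_{y\in\F_p^\times}\zeta_p^{y(t+1)}.\notag
\end{align}

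The inner geometric sum is then evaluated by splitting on whether $t+1$ vanishes: when $t=-1$ it equals $p-1$, while for $t\neq-1$ the exponent $y(t+1)$ runs over all nonzero residues and the sum of all nonzero $p$-th roots of unity equals $-1$. Substituting these values isolates the index $t=-1$ and leaves
\begin{align}
g(\chi)g(\overline{\chi})=(p-1)\chi(-1)-\sum_{\substack{t\in\F_p^\times\\ t\neq-1}}\chi(t).\notag
\end{align}
To finish, I would invoke the companion orthogonality relation $\sum_{t\in\F_p^\times}\chi(t)=0$ for $\chi\neq\varepsilon$ (the dual of \eqref{orthogonal-1}, proved by multiplying the sum by $\chi(a)$ for some $a$ with $\chi(a)\neq1$), so that the remaining sum becomes $\sum_{t\in\F_p^\times}\chi(t)-\chi(-1)=-\chi(-1)$. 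Combining the pieces gives $g(\chi)g(\overline{\chi})=(p-1)\chi(-1)+\chi(-1)=p\chi(-1)$, which completes the nontrivial case.

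The computation is entirely elementary, and I do not expect a genuine obstacle. The only points demanding care are the bookkeeping forced by the convention $\chi(0)=0$ (which guarantees both Gauss sums are honestly sums over $\F_p^\times$, so that $x\mapsto ty$ is a bijection for each fixed $y\neq0$) and the clean separation of the exceptional index $t=-1$ before applying the orthogonality of the nontrivial character.
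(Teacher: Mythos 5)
Your proof is correct and complete: the case split on $\chi=\varepsilon$, the substitution $x=ty$, the evaluation of the inner geometric sum according to whether $t=-1$, and the use of orthogonality of the nontrivial character $\chi$ are exactly the standard argument for this identity. The paper itself supplies no proof here --- it simply cites Greene's equation (1.12) --- so there is nothing to compare against; your derivation is the canonical one and handles the convention $\chi(0)=0$ carefully, which is the only place one could slip.
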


\noindent Furthermore, we recall an identity relating Gauss and Jacobi sums.
\begin{lemma}\cite[eq. 1.14]{greene}
For $\chi_1,\chi_2\in \widehat{\mathbb{F}_p^\times},$ Then
\begin{align}\label{Gauss-Jacobi-relation}
J(\chi_1,\chi_2)=\frac{g(\chi_1)g(\chi_2)}{g(\chi_1\chi_2)}+(p-1)\chi_2(-1)\delta(\chi_1\chi_2).
\end{align}
\end{lemma}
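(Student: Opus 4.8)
The plan is to establish this identity by computing the product $g(\chi_1)g(\chi_2)$ directly and comparing it with the Jacobi sum. First I would expand, using the convention $\chi(0)=0$,
$$
g(\chi_1)g(\chi_2)=\sum_{x\in\F_p}\sum_{y\in\F_p}\chi_1(x)\chi_2(y)\,\zeta_p^{x+y},
$$
and reindex the double sum by the value of $t=x+y$, writing $y=t-x$ so that
$$
g(\chi_1)g(\chi_2)=\sum_{t\in\F_p}\zeta_p^{t}\sum_{x\in\F_p}\chi_1(x)\chi_2(t-x).
$$
The inner sum $S(t):=\sum_{x}\chi_1(x)\chi_2(t-x)$ is the object to analyze, split according to whether $t=0$ or $t\neq0$.

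For the term $t=0$ I would write $S(0)=\sum_{x}\chi_1(x)\chi_2(-x)=\chi_2(-1)\sum_{x}(\chi_1\chi_2)(x)$, and then apply the orthogonality relation \eqref{orthogonal-1} (the sum over $x\neq0$ of $(\chi_1\chi_2)(x)$ equals $p-1$ if $\chi_1\chi_2=\varepsilon$ and $0$ otherwise) to obtain $S(0)=(p-1)\chi_2(-1)\delta(\chi_1\chi_2)$. For each $t\neq0$ I would make the scaling substitution $x=ts$, so that $t-x=t(1-s)$ and
$$
S(t)=\sum_{s}\chi_1(ts)\chi_2\bigl(t(1-s)\bigr)=(\chi_1\chi_2)(t)\sum_{s}\chi_1(s)\chi_2(1-s)=(\chi_1\chi_2)(t)\,J(\chi_1,\chi_2).
$$
Summing the $t\neq0$ contributions and noting that the missing $t=0$ term of $\sum_t(\chi_1\chi_2)(t)\zeta_p^t$ vanishes (since $(\chi_1\chi_2)(0)=0$), this part reproduces exactly $J(\chi_1,\chi_2)\,g(\chi_1\chi_2)$. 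Combining the two cases yields the product form
$$
g(\chi_1)g(\chi_2)=J(\chi_1,\chi_2)\,g(\chi_1\chi_2)+(p-1)\chi_2(-1)\delta(\chi_1\chi_2).
$$

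It remains to pass from this product form to the stated quotient form, and this is the step demanding the most care. The Gauss sum $g(\chi_1\chi_2)$ never vanishes: it equals $-1$ when $\chi_1\chi_2=\varepsilon$ and has absolute value $\sqrt{p}$ otherwise (as follows from \eqref{inverse}), so division is always legitimate. Dividing the product identity by $g(\chi_1\chi_2)$ gives
$$
J(\chi_1,\chi_2)=\frac{g(\chi_1)g(\chi_2)}{g(\chi_1\chi_2)}-\frac{(p-1)\chi_2(-1)\delta(\chi_1\chi_2)}{g(\chi_1\chi_2)}.
$$
The correction term is supported only on the event $\chi_1\chi_2=\varepsilon$, where $\delta(\chi_1\chi_2)=1$ and $g(\chi_1\chi_2)=g(\varepsilon)=-1$; hence on that event $-\delta(\chi_1\chi_2)/g(\chi_1\chi_2)=+\delta(\chi_1\chi_2)$, and off that event both sides of the correction are zero. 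Thus the correction term simplifies precisely to $+(p-1)\chi_2(-1)\delta(\chi_1\chi_2)$, which recovers the claimed formula \eqref{Gauss-Jacobi-relation}. The one subtlety to flag is exactly this sign reconciliation: the identity is written so that the degenerate contribution appears additively and without the denominator $g(\chi_1\chi_2)$, and this is valid only because $g(\varepsilon)=-1$ flips the sign of the divided correction term on the single character combination where $\delta$ is nonzero.
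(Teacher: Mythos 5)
Your derivation is correct, and the sign bookkeeping in the degenerate case $\chi_1\chi_2=\varepsilon$ is exactly right: the product identity $g(\chi_1)g(\chi_2)=J(\chi_1,\chi_2)\,g(\chi_1\chi_2)+(p-1)\chi_2(-1)\delta(\chi_1\chi_2)$ combined with $g(\varepsilon)=-1$ does flip the correction term to appear with a plus sign in the quotient form, which is the one subtlety worth flagging and you flagged it. The paper itself offers no proof to compare against --- it simply imports the identity from Greene \cite[eq.~1.14]{greene} --- and your argument is the standard one (reindex $g(\chi_1)g(\chi_2)$ by $t=x+y$, scale by $t$ on the nonzero fibers, handle $t=0$ by orthogonality). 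One small citation slip: the orthogonality you invoke for $S(0)$, namely $\sum_{x\neq 0}\chi(x)=(p-1)\delta(\chi)$, is the sum over group elements for a fixed character, whereas the paper's \eqref{orthogonal-1} is the dual relation (sum over all characters at a fixed element); the fact you need is equally standard but is not literally the displayed equation you cite.
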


\subsection{$p$-adic gamma functions}
Let $\overline{\mathbb{Q}_p}$ denote the algebraic closure of $\mathbb{Q}_p$ and $\mathbb{C}_p$ denote the completion of $\overline{\mathbb{Q}_p}$. 
We now recall $p$-adic gamma function. For a positive integer $n,$
the $p$-adic gamma function $\Gamma_p(n)$ is defined as
\begin{align}
\Gamma_p(n):=(-1)^n\prod\limits_{0<j<n,p\nmid j}j.\notag
\end{align}
 It can be extended to all $x\in\mathbb{Z}_p$ by setting $\Gamma_p(0):=1$ and for $x\neq0$
\begin{align}
\Gamma_p(x):=\lim_{x_n\rightarrow x}\Gamma_p(x_n),\notag
\end{align}
where $(x_n)$ is a sequence of positive integers $p$-adically approaching to $x.$
For further details, see \cite{kob}. We now recall an important product formula for $p$-adic gamma functions from \cite[eq. (2.8)]{mccarthy-pacific}.
If $m\in\mathbb{Z}^+,$ 
$p\nmid m$  and $x=\frac{r}{p-1}$ with $0\leq r\leq p-1,$ then
\begin{align}\label{prod-1}
\prod_{h=0}^{m-1}\Gamma_p\left(\frac{x+h}{m}\right)=\omega(m^{(1-x)(1-p)})~\Gamma_p(x)\prod_{h=1}^{m-1}\Gamma_p\left(\frac{h}{m}\right).
\end{align}
From \cite{mccarthy-pacific} we also state that if $t\in\mathbb{Z}^{+}$ and $p\nmid t,$ then for $0\leq j\leq p-2$ we have
\begin{align}\label{prod-2}
\omega(t^{-tj})\Gamma_p\left(\left\langle\frac{-tj}{p-1}\right\rangle\right)\prod_{h=1}^{t-1}\Gamma_p\left(\frac{h}{t}\right)
=\prod_{h=1}^{t}\Gamma_p\left(\left\langle\frac{h}{t}-\frac{j}{p-1}\right\rangle\right).
\end{align}

\noindent Moreover, from \cite[Eq. (2.9)]{mccarthy-pacific} we note that 
\begin{align}\label{prod}
\Gamma_p(x)\Gamma_p(1-x)=(-1)^{a_0(x)},
\end{align}
where $a_0(x)\in\{1,2,\ldots, p\}$ such that $a_0(x)\equiv x\pmod{p}.$
Let $\pi \in \mathbb{C}_p$ be the fixed root of the polynomial $x^{p-1} + p$, which satisfies the congruence condition
$\pi \equiv \zeta_p-1 \pmod{(\zeta_p-1)^2}$. Then the following result is known as the Gross-Koblitz formula relating Gauss sum and $p$-adic gamma function as given below.

\begin{theorem}\cite[Gross-Koblitz formula]{gross}\label{gross-koblitz} For $j\in \mathbb{Z}$,
\begin{align}
g(\overline{\omega}^j)=-\pi^{(p-1)\langle\frac{j}{p-1} \rangle}\Gamma_p\left(\left\langle \frac{j}{p-1} \right\rangle\right).\notag
\end{align}
\end{theorem}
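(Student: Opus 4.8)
This is the classical Gross--Koblitz formula, so rather than reprove it from scratch I would recall that it is the exact $p$-adic refinement of Stickelberger's congruence and sketch the route through Dwork's theory. First I would reduce to the range $0\le j\le p-2$: both sides are invariant under $j\mapsto j+(p-1)$, since $\overline{\omega}^{\,j}$ depends only on $j$ modulo $p-1$ and the fractional part $\langle j/(p-1)\rangle$ is $(p-1)$-periodic, while the base case $j\equiv0$ is immediate because $g(\varepsilon)=-1$ and the right-hand side equals $-\pi^{0}\Gamma_p(0)=-1$ by $\Gamma_p(0)=1$. In the reduced range $(p-1)\langle j/(p-1)\rangle=j$, so the target reads $g(\overline{\omega}^{\,j})=-\pi^{\,j}\,\Gamma_p\!\left(j/(p-1)\right)$.

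The engine is Dwork's splitting function $\theta(t)$, the $p$-adic analytic function whose Taylor series is that of $\exp\bigl(\pi(t-t^{p})\bigr)$ and which, thanks to Dwork's estimate on its coefficients, converges on a disc of radius exceeding $1$; under the normalization $\pi\equiv\zeta_p-1\pmod{(\zeta_p-1)^2}$ it satisfies $\theta(\widehat{x})=\zeta_p^{\,x}$ for every Teichm\"{u}ller representative $\widehat{x}=\omega(x)$, in particular $\theta(1)=\zeta_p$. Writing $\theta(t)=\sum_{m\ge0}\theta_m t^{m}$ and inserting $\zeta_p^{\,x}=\theta(\widehat{x})$ into the Gauss sum, I would get $g(\overline{\omega}^{\,j})=\sum_{x\in\F_p^{\times}}\widehat{x}^{\,-j}\theta(\widehat{x})=\sum_{m\ge0}\theta_m\sum_{x}\omega^{\,m-j}(x)$. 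By the orthogonality of multiplicative characters (the companion of \eqref{orthogonal-1}, summed over $x$ rather than over $\chi$), the inner sum is $p-1$ when $m\equiv j\pmod{p-1}$ and $0$ otherwise, so the Gauss sum collapses to $(p-1)\sum_{m\equiv j\,(p-1)}\theta_m$.

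It then remains to identify this sum of Dwork coefficients with the gamma value, that is, to prove $(p-1)\sum_{m\equiv j\,(p-1)}\theta_m=-\pi^{\,j}\Gamma_p(j/(p-1))$. Here I would decompose $\theta$ through the Artin--Hasse exponential, whose coefficients are governed by factorials, so that summing the progression $m\equiv j$ reorganizes into exactly the $p$-adic interpolation of factorials that defines Morita's $\Gamma_p$; the multiplication formula \eqref{prod-1} and the reflection relation \eqref{prod} recalled above are precisely the tools for bookkeeping the resulting powers of $\omega$ and the constants. Stickelberger's congruence $g(\overline{\omega}^{\,j})\equiv-\pi^{\,j}/j!\pmod{\pi^{\,j+1}}$, which is consistent with $\Gamma_p(j/(p-1))\equiv1/j!\pmod{\pi}$ via the functional equation $\Gamma_p(x+1)=-x\,\Gamma_p(x)$, then fixes the leading normalization and excludes any spurious root-of-unity factor.

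The main obstacle is exactly this last identification: Stickelberger alone pins down only the residue modulo $\pi$, and upgrading it to an exact equality requires the full $p$-adic analyticity of $\theta$ together with Dwork's congruences on its coefficients. An alternative that sidesteps the coefficient bookkeeping is to verify that both sides obey the same functional equations --- the Hasse--Davenport product relation for Gauss sums on the left and the Gauss multiplication formula \eqref{prod-1} for $\Gamma_p$ on the right --- and then to invoke the characterization of $\Gamma_p$ as the unique continuous function satisfying that relation with the prescribed normalization to force equality; either way, the $p$-adic analytic continuation step is the crux.
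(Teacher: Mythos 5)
There is nothing in the paper to compare you against here: Theorem \ref{gross-koblitz} is imported verbatim from Gross--Koblitz \cite{gross} and the paper supplies no proof, so the only question is whether your sketch is a viable route to the classical result. It is the standard Dwork-theoretic route, and the skeleton is right: the reduction to $0\le j\le p-2$ by $(p-1)$-periodicity, the base case from $g(\varepsilon)=-1$ and $\Gamma_p(0)=1$, the insertion of the splitting function $\theta(t)=\exp(\pi(t-t^p))$ with $\theta(\omega(x))=\zeta_p^x$, and the collapse of the Gauss sum by character orthogonality to $(p-1)\sum_{m\equiv j\ (p-1)}\theta_m$ are all correct and are exactly how the published proofs begin.

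The gap is the one you yourself flag: the identification $(p-1)\sum_{m\equiv j\ (p-1)}\theta_m=-\pi^{j}\Gamma_p\left(j/(p-1)\right)$ is the entire content of the theorem, and your text only names the ingredients (Artin--Hasse decomposition, Dwork's congruences on the $\theta_m$, the interpolation of partial factorials defining $\Gamma_p$) without carrying out the computation, so what you have is a road map rather than a proof. Two further cautions. First, leaning on \eqref{prod-1} and \eqref{prod} ``for bookkeeping'' in that step risks circularity: in this paper and in \cite{mccarthy-pacific} those product formulas are stated in a normalization that is most naturally \emph{derived from} Gross--Koblitz together with the Hasse--Davenport product relation, so they cannot be inputs to its proof; the honest argument instead bounds the tail of $\sum_{m\equiv j}\theta_m$ $p$-adically and matches it term by term against the limit defining $\Gamma_p(j/(p-1))$, with Stickelberger confirming only the leading coefficient. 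Second, the proposed alternative --- characterizing $\Gamma_p$ by its multiplication formula and matching functional equations on both sides --- is not a standard complete argument; no off-the-shelf uniqueness statement of that strength is available, so that route would itself require proof. If this theorem is to be used as the paper uses it, citing \cite{gross} is the right move; if you genuinely want a self-contained proof, the missing middle third must be written out.
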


\noindent We now state a lemma that is obtained by simply applying the Gross-Koblitz formula.
\begin{lemma}
Let $p$ be an odd prime and $1\leq j\leq p-2.$ Then
\begin{align}\label{lemma-1}
\Gamma_p\left(\left\langle1-\frac{j}{p-1}\right\rangle\right)\Gamma_p\left(\left\langle\frac{j}{p-1}\right\rangle\right)
=-\omega^j(-1).
\end{align}
\end{lemma}


\subsection{Hypergeometric functions}
In this section, we mainly discuss three propositions that will be used to establish connections of the number of $\F_p$-points of $X_{\lambda}$ and the functions ${_3G_3}(\lambda)_p$ and ${_9G_9}(\lambda)_p.$ 
For $\lambda\in\F_p$, define the following character sum:

$$
C_p(\lambda):=\sum\limits_{j=0}^{p-2}g(\phi\overline{\omega}^{2j})g(\omega^j)^3g(\phi\overline{\omega}^j)\overline{\omega}^j (\lambda).
$$
The next lemma provides relation between the function ${_3F_2(\lambda)_p}$ and the character sum $C_p(\lambda).$

\begin{lemma}\label{3F2-Lemma}
For an odd prime $p$ and $\lambda\in\F_p$ we have 

\begin{align}
{_3F_2(\lambda)_p}&=\frac{\phi(\lambda-1)}{p^3(p-1)}C_p\left(\frac{\lambda}{(1-\lambda)^2}\right)+\frac{\phi(-1)}{p}\delta(1+\lambda)\notag\\
&+\delta(1-\lambda)\Delta(p)\left[{\chi_4\choose\phi} {\phi\overline{\chi_4}\choose\overline{\chi_4}}+ {\phi\chi_4\choose\phi} {\overline{\chi_4}\choose\phi\overline{\chi_4}}\right].\notag
\end{align}
\end{lemma}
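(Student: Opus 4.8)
The plan is to unpack the definition of ${_3F_2(\lambda)_p}$ in terms of binomial coefficients, convert each binomial to Gauss sums via the normalized Jacobi sum formula ${\chi\choose\psi}=\frac{\psi(-1)}{p}J(\chi,\overline{\psi})$ together with the Gauss--Jacobi relation \eqref{Gauss-Jacobi-relation}, and then reindex the resulting character sum so that it matches the defining sum for $C_p\!\left(\frac{\lambda}{(1-\lambda)^2}\right)$. Concretely, I would first write
\begin{align}
{_3F_2}(\lambda)_p=\sum_{\chi\in\widehat{\F_p^\times}}{\phi\chi\choose\chi}^3\chi(\lambda),\notag
\end{align}
parametrize the characters by $\chi=\omega^j$ (so $\chi$ runs over all of $\widehat{\F_p^\times}$ as $j$ runs from $0$ to $p-2$), and expand each factor ${\phi\omega^j\choose\omega^j}$ into Gauss sums. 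Each such binomial becomes, up to a power of $p$ and a sign/character factor, a ratio $\frac{g(\phi\omega^j)g(\overline{\omega}^j)}{g(\phi)}$ after applying \eqref{Gauss-Jacobi-relation}; collecting the three identical factors produces $g(\phi\omega^j)^3$ and $g(\overline{\omega}^j)^3$ in the numerator. The key algebraic move is then to use the inversion formula \eqref{inverse}, $g(\chi)g(\overline{\chi})=p\chi(-1)-(p-1)\delta(\chi)$, to rewrite the powers of $g(\omega^j)$ and $g(\phi\overline{\omega}^j)$ appearing in $C_p$ in terms of the $g(\overline{\omega}^j)$, $g(\phi\omega^j)$ that arise from ${_3F_2}$; one also has to convert the argument, since $C_p$ is evaluated at $\frac{\lambda}{(1-\lambda)^2}$ rather than at $\lambda$, and this is exactly where the prefactor $\phi(\lambda-1)$ and the overall normalization $\frac{1}{p^3(p-1)}$ will be produced.

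After the main sum is identified, I would track the degenerate characters separately. The definition of $C_p(\lambda)$ sums $g(\phi\overline{\omega}^{2j})g(\omega^j)^3g(\phi\overline{\omega}^j)\overline{\omega}^j(\lambda)$ over all $j$, whereas the Gauss--Jacobi relation \eqref{Gauss-Jacobi-relation} and the inversion formula \eqref{inverse} each carry correction terms involving $\delta(\chi)$, $\delta(\chi_1\chi_2)$ that are nonzero only when one of the characters $\omega^j$, $\phi\omega^j$, $\phi\overline{\omega}^{2j}$ is trivial. These boundary contributions, occurring at $j=0$ and at $j=\frac{p-1}{2}$ (where $\omega^j=\phi$), are precisely what yield the extra terms $\frac{\phi(-1)}{p}\delta(1+\lambda)$ and the $\Delta(p)\,\delta(1-\lambda)$ term. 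The appearance of $\chi_4$ (a quartic character) in the final bracketed expression signals that when $\lambda=1$ the argument $\frac{\lambda}{(1-\lambda)^2}$ blows up, so the identity must be handled by directly evaluating the degenerate values, and the quartic character enters because $\phi\overline{\omega}^{2j}=\varepsilon$ forces $\omega^{2j}=\phi$, i.e.\ $\omega^{4j}=\varepsilon$. I would isolate these exceptional indices before applying the generic Gauss-sum manipulations so that the main term is a clean sum over the non-degenerate range.

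The hard part will be the bookkeeping of the correction terms, and in particular verifying that the degenerate contributions assemble exactly into $\frac{\phi(-1)}{p}\delta(1+\lambda)$ and into the $\chi_4$-expression multiplied by $\Delta(p)\,\delta(1-\lambda)$. The factor $\Delta(p)$ reflects that $\chi_4$ exists (as a genuine quartic character) only when $p\equiv1\pmod4$, so one must check that the would-be quartic contributions vanish when $p\equiv3\pmod4$; this is a case distinction that the product formulas \eqref{prod-1}, \eqref{prod-2} and the relation \eqref{prod} are designed to control. The smooth part of the argument---matching the principal sum with $C_p$---is essentially forced by the Gross--Koblitz formula (Theorem \ref{gross-koblitz}) and the Gauss-sum identities, and I expect it to go through mechanically once the change of variable in the argument is pinned down; the delicate accounting lies entirely in the two degenerate loci $\lambda=\pm1$, where the quadratic and quartic Jacobi-sum evaluations must be carried out explicitly and shown to reproduce the stated binomials ${\chi_4\choose\phi}{\phi\overline{\chi_4}\choose\overline{\chi_4}}+{\phi\chi_4\choose\phi}{\overline{\chi_4}\choose\phi\overline{\chi_4}}$.
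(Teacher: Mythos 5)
There is a genuine gap at the heart of your plan. The identity to be proved is not a reindexing of the defining sum $\sum_{\chi}{\phi\chi\choose\chi}^3\chi(\lambda)$: the right-hand side involves the argument $\frac{\lambda}{(1-\lambda)^2}$, and the character sum $C_p$ has the shape $\sum_j g(\phi\overline{\omega}^{2j})g(\omega^j)^3g(\phi\overline{\omega}^j)\,\overline{\omega}^j(\cdot)$, i.e.\ one Gauss sum at the \emph{doubled} character $\phi\overline{\omega}^{2j}$ and only one at $\phi\overline{\omega}^{j}$. Expanding ${\phi\chi\choose\chi}^3$ as you propose produces $g(\phi\chi)^3g(\overline{\chi})^3/g(\phi)^3$ against $\chi(\lambda)$, and no application of \eqref{inverse} or \eqref{Gauss-Jacobi-relation} will convert $g(\phi\chi)^3$ into $g(\phi\chi^2)g(\phi\chi)$ while simultaneously changing the argument from $\lambda$ to $\frac{\lambda}{(1-\lambda)^2}$ and producing the prefactor $\phi(\lambda-1)$. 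What is actually needed is a finite-field quadratic transformation for ${_3F_2}$; the paper invokes Greene's Theorem 4.28, which gives (up to the explicitly listed correction terms)
\begin{align}
{_3F_2(\lambda)_p}=\phi(1-\lambda)\frac{p}{p-1}\sum\limits_{\chi}{\phi\chi^2\choose\chi}{\phi\chi\choose\chi}{\phi\chi\choose\chi}\chi\left(\frac{-\lambda}{(1-\lambda)^2}\right)+\cdots.\notag
\end{align}
Once this is granted, the remainder is exactly the routine bookkeeping you describe: expand ${\phi\chi^2\choose\chi}{\phi\chi\choose\chi}^2$ via \eqref{Gauss-Jacobi-relation}, use $g(\phi)^2=p\phi(-1)$ from \eqref{inverse}, absorb $\chi(-1)$ into the argument, and set $\chi=\overline{\omega}^j$ to recognize $C_p\left(\frac{\lambda}{(1-\lambda)^2}\right)$; the leftover $\frac{\phi(\lambda(\lambda-1))}{p^2}$ cancels against the correction $-\frac{1}{p^2}\phi\left(\frac{\lambda-1}{\lambda}\right)$ coming from Greene's theorem. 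But the transformation itself is the entire content of the lemma, and your proposal contains no mechanism for proving it.

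Relatedly, your account of the degenerate terms is not correct. The contributions $\frac{\phi(-1)}{p}\delta(1+\lambda)$ and $\delta(1-\lambda)\Delta(p)\left[{\chi_4\choose\phi} {\phi\overline{\chi_4}\choose\overline{\chi_4}}+ {\phi\chi_4\choose\phi} {\overline{\chi_4}\choose\phi\overline{\chi_4}}\right]$ are built into Greene's transformation formula itself (they reflect that $\lambda\mapsto\frac{-\lambda}{(1-\lambda)^2}$ is two-to-one and degenerates at $\lambda=\pm1$); they do not arise as $\delta(\chi)$-corrections at $j=0$ or $j=\frac{p-1}{2}$ in \eqref{Gauss-Jacobi-relation} and \eqref{inverse} --- those corrections contribute single terms $\chi(\lambda)$ for isolated characters $\chi$, not delta functions in the parameter $\lambda$. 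Finally, the Gross--Koblitz formula and the $p$-adic product formulas \eqref{prod-1}, \eqref{prod-2}, \eqref{prod} play no role here: the lemma is a statement purely about Gauss and Jacobi sums over $\F_p$, and those $p$-adic tools enter only later, in Propositions \ref{3G3} and \ref{9G9}.
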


\begin{proof}
We can write the function $_3F_2(\lambda)_p$ by applying Theorem 4.28 of \cite{greene} as given below. 
\begin{align}\label{3F2-Lemma-eq-1}
{_3F_2(\lambda)_p}=S_p-\frac{\phi(\frac{\lambda-1}{\lambda})}{p^2}+\frac{\phi(-1)}{p}\delta(1+\lambda)+\delta(1-\lambda)\Delta(p)\left[{\chi_4\choose\phi} {\phi\overline{\chi_4}\choose\overline{\chi_4}}+ {\phi\chi_4\choose\phi} {\overline{\chi_4}\choose\phi\overline{\chi_4}}\right],
\end{align}
where $$S_p:=\phi(1-\lambda)\frac{p}{p-1}\sum\limits_{\chi}{\phi\chi^2\choose\chi}{\phi\chi\choose\chi}{\phi\chi\choose\chi}\chi\left(\frac{-\lambda}{(1-\lambda)^2}\right).$$ By using the definition of binomial and then applying \eqref{Gauss-Jacobi-relation} and \eqref{inverse} we obtain

$$
S_p=\frac{\phi(\lambda(\lambda-1))}{p^2}+\frac{\phi(\lambda-1)}{p^3(p-1)}\sum\limits_{\chi}g(\phi\chi^2)g(\overline{\chi})^3g(\phi\chi)\chi\left(\frac{\lambda}{(1-\lambda)^2}\right).
$$
Finally, replacing $\chi$ by $\overline{\omega}^j$ in the above summation and then substituting the expression for $S_p$ into \eqref{3F2-Lemma-eq-1} we derive the result. 
\end{proof}

\begin{proposition}\label{Trace-1}
For an odd prime $p$ and $\lambda\in\F_p^{\times}$ we have 
$$
A_p(\lambda)=-\frac{\phi(-1)}{p(p-1)}C_p\left(\frac{1-\lambda}{\lambda^2}\right).
$$
\end{proposition}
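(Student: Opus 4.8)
The plan is to route the two-dimensional point count of $X_\lambda$ through the \emph{square} of a Clausen trace and then invoke the two relations already established, namely Theorem~\ref{relation-2} (which expresses $a^{\Cl}_p(\cdot)^2$ through ${_3F_2}$) and Lemma~\ref{3F2-Lemma} (which expresses ${_3F_2}$ through $C_p$). Writing $\mu=\tfrac{1-\lambda}{\lambda^2}$, I would first reduce the defining sum
\[
-A_p(\lambda)=\sum_{x,y\in\F_p}\phi\big(xy(1+x+y)(xy+\mu)\big)
\]
by passing to the symmetric coordinates $s=x+y$ and $P=xy$. Since the number of ordered pairs $(x,y)$ with prescribed $(s,P)$ equals $1+\phi(s^2-4P)$, this gives $-A_p(\lambda)=\sum_{s,P}\big(1+\phi(s^2-4P)\big)\phi\big(P(P+\mu)(1+s)\big)$. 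The contribution of the summand $1$ vanishes because $\sum_s\phi(1+s)=0$, leaving the single four-factor sum
\[
-A_p(\lambda)=\sum_{s,P}\phi\big(P(P+\mu)(1+s)(s^2-4P)\big).
\]

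The heart of the argument is to identify this last sum with $a^{\Cl}_p(-1/\lambda)^2-p$. I would do this by expanding each of the four quadratic characters through Gauss-sum Fourier inversion, $\phi(a)=g(\phi)^{-1}\sum_t\phi(t)\zeta_p^{at}$, carrying out the two elementary inner sums (the $P$-sum produces a linear constraint, the $s$-sum is a quadratic Gauss sum evaluated by completing the square), and then reorganizing the outcome, via the relations \eqref{inverse} and \eqref{Gauss-Jacobi-relation}, into the product of the \emph{two independent} Clausen character sums $\big(\sum_t\phi((t-1)(t^2-1/\lambda))\big)^2=a^{\Cl}_p(-1/\lambda)^2$, the additive term $-p$ arising from the degenerate (diagonal) locus. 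It is worth stressing that integrating out a single variable alone yields only a single Clausen trace weighted by a quadratic character, so the square genuinely emerges from the two-dimensional structure; an alternative to the Gauss-sum bookkeeping would be a direct birational matching of $(s,P)$ with the pair of Clausen variables.

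With the identity $A_p(\lambda)=p-a^{\Cl}_p(-1/\lambda)^2$ in hand, the remaining steps are purely formal. Applying Theorem~\ref{relation-2} with Clausen parameter $-1/\lambda$, and using $\tfrac{-1/\lambda}{1-1/\lambda}=\tfrac{1}{1-\lambda}$ together with $\phi(1-1/\lambda)=\phi(\lambda(\lambda-1))$, I obtain
\[
a^{\Cl}_p(-1/\lambda)^2=p+p^2\phi(\lambda(\lambda-1))\,{_3F_2}\!\left(\tfrac{1}{1-\lambda}\right)_p,
\]
so that $A_p(\lambda)=-p^2\phi(\lambda(\lambda-1)){_3F_2}(\tfrac{1}{1-\lambda})_p$. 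Feeding $L=\tfrac{1}{1-\lambda}$ into Lemma~\ref{3F2-Lemma}, whose correction terms vanish for generic $\lambda$ since then $L\neq\pm1$, and using the clean algebraic coincidences $\tfrac{L}{(1-L)^2}=\mu$ and $\phi(L-1)=\phi(-1)\phi(\lambda(\lambda-1))$ along with $\phi(\lambda)^2=\phi(\lambda-1)^2=1$, the accumulated quadratic characters collapse and the powers of $p$ combine to give precisely $A_p(\lambda)=-\tfrac{\phi(-1)}{p(p-1)}C_p(\mu)$.

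The main obstacle is the exact point-count identity $A_p(\lambda)=p-a^{\Cl}_p(-1/\lambda)^2$: both producing the square from the genuinely two-variable sum and pinning down the degenerate $-p$ correction require care in the Gauss-sum regrouping, since the naive reductions flatten the square away. A secondary, more delicate point is the treatment of the finitely many exceptional $\lambda$ — for instance values where the fibre $y^2=(x-1)(x^2-1/\lambda)$ degenerates, or where the $\delta$-supported correction terms of Lemma~\ref{3F2-Lemma} switch on — which must be verified to either satisfy the stated identity directly or be absorbed without affecting it.
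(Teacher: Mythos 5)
Your proposal takes a genuinely different route from the paper, but as written it has a gap at its center. The paper proves Proposition~\ref{Trace-1} directly: it expands $C_p\big(\tfrac{1-\lambda}{\lambda^2}\big)$ as a five-fold exponential sum via the definition of the Gauss sums, applies the orthogonality relation \eqref{orthogonal-1} to impose the single multiplicative constraint $yzu=\beta x^2v$, rescales variables, and collapses the result to the defining double character sum of $A_p(\lambda)$ — uniformly for all $\lambda\in\F_p^{\times}$ and without ever mentioning the Clausen curve. Your route instead factors through the identity $-A_p(\lambda)=a^{\Cl}_p(-1/\lambda)^2-p$ and then chains Theorem~\ref{relation-2} with Lemma~\ref{3F2-Lemma}. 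Your opening reduction to symmetric coordinates is correct (the count $1+\phi(s^2-4P)$ and the vanishing of the $\sum_s\phi(1+s)$ term check out), and your closing bookkeeping is also correct: with $L=\tfrac{1}{1-\lambda}$ one does get $\tfrac{L}{(1-L)^2}=\mu$ and the characters collapse to $-\tfrac{\phi(-1)}{p(p-1)}$. The problem is that the entire content of the proposition now sits in the unproved identity $-A_p(\lambda)=a^{\Cl}_p(-1/\lambda)^2-p$, for which you offer only a strategy sketch ("Fourier inversion of the four quadratic characters, carry out the inner sums, reorganize into a product of two Clausen sums") and which you yourself flag as the main obstacle. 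Turning a genuinely two-dimensional sum into the square of a one-dimensional sum plus a $-p$ correction is exactly the kind of step that does not follow from routine Gauss-sum manipulation; it is the hard part, and it is missing.

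Two further concrete issues. First, the proposition is asserted for \emph{all} $\lambda\in\F_p^{\times}$, and your intermediate identity genuinely fails at exceptional values: e.g.\ for $p=5$, $\lambda=2$ one computes $A_5(2)=0$ while $p-a^{\Cl}_5(-1/2)^2=5$, precisely because $L=\tfrac{1}{1-\lambda}=-1$ there and the $\delta(1+L)$ correction in Lemma~\ref{3F2-Lemma} switches on; likewise $\lambda=1$ makes $E^{\Cl}_{-1/\lambda}$ singular so Theorem~\ref{relation-2} is unavailable. These cases are acknowledged but not resolved, whereas the paper's direct computation needs no case split. Second, your parameter $-1/\lambda$ is not the one that falls out of Theorem~\ref{relation-2} most directly (that is $t=\tfrac{1-\lambda}{\lambda}$, since $\tfrac{t}{1+t}=1-\lambda$); the two choices appear to give equal squares of traces by a transformation of ${}_3F_2$, but that equivalence is itself an inversion identity of Greene type that you would need to cite or prove. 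If you want to salvage this approach, the cleanest fix is to replace the Gauss-sum derivation of the central identity by the paper's own computation — at which point the detour through the Clausen curve becomes unnecessary.
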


\begin{proof}
Suppose that $\beta=\frac{1-\lambda}{\lambda^2}.$ Then we have 

\begin{align}
C_p(\beta)&=\sum\limits_{j=0}^{p-2}g(\phi\overline{\omega}^{2j}) g(\omega^j)^3 g(\phi\overline{\omega}^j)\overline{\omega}^j(\beta)\notag\\
&=\sum\limits_{x,y,z,u,v\neq0}\phi(xv) \zeta_p^{x+y+z+u+v} \sum\limits_{j=0}^{p-2}\omega^j\left(\frac{yzu}{\beta x^2v}\right).\notag
\end{align}

\noindent Now, by the orthogonality relation given in \eqref{orthogonal-1}, we claim that the above sum is non zero only if $yzu=\beta x^2v.$ Therefore, using this fact we may have

\begin{align}
C_p(\beta)&=(p-1)\sum\limits_{x,z,u,v\neq0}\phi(xv) \zeta_p^{x+\frac{\beta x^2v}{zu}+z+u+v}.\notag
\end{align}

Taking the transformations $u\mapsto ux, ~ v\mapsto vx ~ \text{and}~ z\mapsto zx$ we write

\begin{align}
C_p(\beta)&=(p-1)\sum\limits_{z,u,v\neq0}\phi(v)\sum\limits_{x\neq0} \zeta_p^{x\cdot f(z,u,v)},\notag
\end{align}

where $f(z,u,v):=1+\frac{\beta v}{zu}+z+u+v.$
Further simplifying we obtain
\begin{align}
C_p(\beta)&=(p-1)\sum\limits_{\substack{z,u,v\neq0\\ f(z,u,v)\neq0}}\phi(v)\sum\limits_{x\neq0} \zeta_p^{x\cdot f(z,u,v)} + (p-1)\sum\limits_{\substack{z,u,v\neq0\\ f(z,u,v)=0}}\phi(v)\sum\limits_{x\neq0} \zeta_p^{x\cdot f(z,u,v)}\notag\\
&=(p-1)g(\varepsilon)\sum\limits_{\substack{z,u,v\neq0\\ f(z,u,v)\neq0}}\phi(v) +(p-1)^2\sum\limits_{\substack{z,u,v\neq0\\ f(z,u,v)=0}}\phi(v)\notag\\
&=-(p-1)\sum\limits_{\substack{z,u,v\neq0\\ f(z,u,v)\neq0}}\phi(v) +(p-1)^2\sum\limits_{\substack{z,u,v\neq0\\ f(z,u,v)=0}}\phi(v)\notag\\
&=-(p-1)\sum\limits_{\substack{z,u,v\neq0\\ f(z,u,v)\neq0}}\phi(v)-(p-1)\sum\limits_{\substack{z,u,v\neq0\\ f(z,u,v)=0}}\phi(v)
+(p-1)\sum\limits_{\substack{z,u,v\neq0\\ f(z,u,v)=0}}\phi(v)\notag\\
& +(p-1)^2\sum\limits_{\substack{z,u,v\neq0\\ f(z,u,v)=0}}\phi(v)\notag\\
&=-(p-1)\sum\limits_{z,u,v\neq0}\phi(v)+p(p-1)\sum\limits_{\substack{z,u,v\neq0\\ f(z,u,v)=0}}\phi(v)\notag\\
&=p(p-1)\sum\limits_{\substack{z,u,v\neq0\\ f(z,u,v)=0}}\phi(v)\notag\\
&=p(p-1)\phi(-1)\sum\limits_{\substack{z,u\neq0\\ uz\neq-\beta}}\phi\left(1+\frac{\beta}{uz}\right)\phi(1+u+z)+p(p-1)\sum\limits_{\substack{u,z\neq0\\1+z+u=0}} \sum\limits_{v\neq0}\phi(v)\notag\\
&=p(p-1)\phi(-1)\sum\limits_{\substack{z,u\neq0\\ uz\neq-\beta}}\phi\left(1+\frac{\beta}{uz}\right)\phi(1+u+z)\notag\\
&=p(p-1)\phi(-1)\sum\limits_{\substack{z,u\neq0\\ uz\neq-\beta}}\phi(uz)\phi(uz+\beta)\phi(1+u+z)\notag\\
&=p(p-1)\phi(-1)\sum\limits_{z,u}\phi(uz)\phi(uz+\beta)\phi(1+u+z)\notag\\&=-p(p-1)\phi(-1)A_p(\lambda).\notag
\end{align}

\end{proof}

Now, we state three elementary lemmas that are going to use in the proofs of the next propositions.
\begin{lemma}\label{exponent-4}
For a prime $p\geq5$ and $0\leq j\leq p-2$ we have

$$1+\left\lfloor\frac{-1}{6}+\frac{2j}{p-1}\right\rfloor=\left\lfloor\frac{11}{12}+\frac{j}{p-1}\right\rfloor + \left\lfloor\frac{5}{12}+\frac{j}{p-1}\right\rfloor.$$
\end{lemma}

\begin{proof}
Since we have $\left\lfloor\frac{12j}{p-1}\right\rfloor=k,$ where $0\leq k\leq11.$ Therefore, considering $\left\lfloor\frac{12j}{p-1}\right\rfloor=k,$ it is easy to verify the result. 
\end{proof}

\begin{lemma}\label{exponent-1}
For a prime $p\geq5$ and $0\leq j\leq p-2$ we have

$$
\left\lfloor\frac{1}{2}+\frac{3j}{p-1}\right\rfloor=\left\lfloor\frac{1}{6}+\frac{j}{p-1}\right\rfloor + \left\lfloor\frac{5}{6}+\frac{j}{p-1}\right\rfloor+\left\lfloor\frac{1}{2}+\frac{j}{p-1}\right\rfloor.$$
\end{lemma}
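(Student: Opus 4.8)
The plan is to reduce the identity to a finite case check governed by the single real parameter $t:=\frac{j}{p-1}$. Since $0\le j\le p-2$, we have $0\le t<1$, and both sides depend only on $t$. First I would record that each summand on the right is a $\{0,1\}$-valued indicator on $[0,1)$: one has $\lfloor\frac{1}{6}+t\rfloor=1$ exactly when $t\ge\frac{5}{6}$, $\lfloor\frac{5}{6}+t\rfloor=1$ exactly when $t\ge\frac{1}{6}$, and $\lfloor\frac{1}{2}+t\rfloor=1$ exactly when $t\ge\frac{1}{2}$, each being $0$ otherwise. On the left, as $t$ ranges over $[0,1)$ the quantity $\frac{1}{2}+3t$ ranges over $[\frac{1}{2},\frac{7}{2})$, so $\lfloor\frac{1}{2}+3t\rfloor$ takes the successive values $0,1,2,3$ with jumps precisely at $t=\frac{1}{6},\frac{1}{2},\frac{5}{6}$.

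The key observation is that both sides are right-continuous step functions of $t$ sharing the same three breakpoints $\frac{1}{6},\frac{1}{2},\frac{5}{6}$, so it suffices to compare their constant values on the four intervals $[0,\frac{1}{6})$, $[\frac{1}{6},\frac{1}{2})$, $[\frac{1}{2},\frac{5}{6})$, $[\frac{5}{6},1)$. On $[0,\frac{1}{6})$ both sides vanish; on $[\frac{1}{6},\frac{1}{2})$ the left side equals $1$ and the right side equals $0+1+0=1$; on $[\frac{1}{2},\frac{5}{6})$ the left side equals $2$ and the right side equals $0+1+1=2$; and on $[\frac{5}{6},1)$ the left side equals $3$ and the right side equals $1+1+1=3$. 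Since the values agree on every interval, the identity follows.

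The one point requiring care --- and the only place a careless argument could slip --- is the behaviour exactly at the breakpoints, because for odd $p$ the value $t=\frac{1}{2}$ is attained at $j=\frac{p-1}{2}$, and $t=\frac{1}{6}$ or $\frac{5}{6}$ is attained when $6\mid p-1$. I would therefore check directly that at these rational points the floors are assigned to the interval containing them; for instance at $t=\frac{1}{2}$ the left side is $\lfloor 2\rfloor=2$ while the right side is $\lfloor\frac{2}{3}\rfloor+\lfloor\frac{4}{3}\rfloor+\lfloor 1\rfloor=0+1+1=2$, consistent with the $[\frac{1}{2},\frac{5}{6})$ row. Equivalently, and in the style of the preceding Lemma~\ref{exponent-4}, one may set $k:=\lfloor\frac{6j}{p-1}\rfloor\in\{0,1,\dots,5\}$ and verify the six resulting cases mechanically, which packages this boundary bookkeeping automatically. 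No genuine obstacle arises beyond this elementary verification.
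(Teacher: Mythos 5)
Your proof is correct and follows essentially the same route as the paper: the paper also reduces the identity to a finite case check, partitioning the range of $\frac{3j}{p-1}$ into the subintervals $[0,\tfrac{1}{2}),[\tfrac{1}{2},1),\ldots,[\tfrac{5}{2},3)$ and verifying each case. Your version is a slightly tidier packaging of the same idea (four intervals determined by the common breakpoints $\tfrac{1}{6},\tfrac{1}{2},\tfrac{5}{6}$, with the endpoint behaviour checked explicitly), but there is no substantive difference in method.
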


\begin{proof}
Since for $0\leq j\leq p-2,$ we have $0\leq\frac{3j}{p-1}<3.$ So,
by considering $\frac{3j}{p-1}$ in the sub intervals $[0,1/2),[1/2, 1),\ldots, [2.5,3)$ of $[0,3),$ 
it is easy to show the result.
\end{proof}

\begin{lemma}\label{exponent-2}
For a prime $p\geq5$ and $0\leq j\leq p-2$ we have

\begin{align}
\left\lfloor\frac{1}{2}+\frac{6j}{p-1}\right\rfloor&=\left\lfloor\frac{1}{12}+\frac{j}{p-1}\right\rfloor + \left\lfloor\frac{5}{12}+\frac{j}{p-1}\right\rfloor+\left\lfloor\frac{7}{12}+\frac{j}{p-1}\right\rfloor\notag\\
&+\left\lfloor\frac{11}{12}+\frac{j}{p-1}\right\rfloor+\left\lfloor\frac{1}{4}+\frac{j}{p-1}\right\rfloor+\left\lfloor\frac{3}{4}+\frac{j}{p-1}\right\rfloor.\notag
\end{align}
\end{lemma}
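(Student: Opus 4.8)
The plan is to prove Lemma~\ref{exponent-2} by exactly the same elementary floor-function bookkeeping used in Lemmas~\ref{exponent-4} and~\ref{exponent-1}, now carried out on a finer partition. The single governing quantity is $\frac{12j}{p-1}$: as $j$ ranges over $0\le j\le p-2$, this lies in $[0,12)$, and I would set $k=\left\lfloor\frac{12j}{p-1}\right\rfloor\in\{0,1,\dots,11\}$. Writing $\frac{j}{p-1}=\frac{k}{12}+\theta$ with $0\le\theta<\frac{1}{12}$ reduces every floor on both sides of the claimed identity to a function of the integer $k$ alone, since each shift $\tfrac{1}{12},\tfrac{5}{12},\tfrac{7}{12},\tfrac{11}{12},\tfrac{1}{4},\tfrac{3}{4}$ is a multiple of $\tfrac{1}{12}$ (namely $1,5,7,11,3,9$ over $12$), and likewise the left-hand shift $\tfrac{1}{2}$ together with the coefficient $6$ become $\tfrac{6}{12}$ and $\tfrac{72}{12}$.

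Concretely, I would first rewrite the left side as $\left\lfloor\frac{6}{12}+\frac{6j}{p-1}\right\rfloor=\left\lfloor\frac{6+6k}{12}+6\theta\right\rfloor$ and each right-hand term as $\left\lfloor\frac{a+k}{12}+\theta\right\rfloor$ for the appropriate $a\in\{1,5,7,11,3,9\}$. Because $0\le\theta<\tfrac{1}{12}$, the fractional perturbation $\theta$ (or $6\theta<\tfrac12$) cannot push any of these expressions past the next integer unless the numerator is already an exact multiple of $12$; so in all but the finitely many boundary cases the floors equal $\left\lfloor\frac{6+6k}{12}\right\rfloor$ and $\left\lfloor\frac{a+k}{12}\right\rfloor$ respectively. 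The identity then becomes a purely arithmetic statement about the integer $k$, which I would verify by tabulating $k=0,1,\dots,11$: for each $k$ one computes the six right-hand floors, sums them, and checks equality with the left-hand floor. This is a finite check of twelve rows and is the natural analogue of the phrase ``it is easy to verify the result'' used in the earlier two lemmas.

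The only genuine subtlety, and the step I would treat most carefully, is the behaviour at the partition boundaries, i.e.\ the values of $j$ for which $\frac{12j}{p-1}$ is itself an integer (so $\theta=0$) versus the values for which one of the individual numerators $6+6k$, $a+k$ equals a multiple of $12$. At such points a floor can jump, and one must confirm that the jumps on the two sides occur in matching locations so the identity is preserved; this is precisely the place where the hypothesis $p\ge5$ (ensuring $p-1$ is not divisible by small factors in a way that would create spurious coincidences) is implicitly used. Since both sides are built from the same underlying real number $\frac{j}{p-1}$ and all shifts are in $\frac{1}{12}\mathbb{Z}$, the boundary jumps are forced to align, and the case-by-case table confirms this. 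I expect no deeper obstacle: the result is an identity of step functions on a finite index set, and the entire content is the organized verification over the twelve residue classes of $k$.
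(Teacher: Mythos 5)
Your proposal is correct and follows essentially the same route as the paper: the paper's (very terse) proof also reduces the identity to a finite case check on $k=\left\lfloor\frac{12j}{p-1}\right\rfloor\in\{0,1,\dots,11\}$, exactly as in its Lemma~\ref{exponent-4}, and your twelve-row table verifies it. One small remark: your worry about boundary cases is moot, since every right-hand fractional part is a multiple of $\tfrac{1}{12}$ and $\theta<\tfrac{1}{12}$ (resp.\ $6\theta<\tfrac12$ on the left), so no floor ever jumps and the hypothesis $p\ge 5$ plays no real role --- the identity is just Hermite's identity $\lfloor 6y\rfloor=\sum_{i=0}^{5}\lfloor y+\tfrac{i}{6}\rfloor$ at $y=\tfrac{1}{12}+\tfrac{j}{p-1}$, valid for all real arguments.
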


\begin{proof}
Proof follows easily by using similar arguments as in the proof of Lemma \ref{exponent-1}.
\end{proof}

\begin{lemma}\label{exponent-3}
For a prime $p\geq5$ and $1\leq j\leq p-2$ we have

\begin{align}
\left\lfloor\frac{-3j}{p-1}\right\rfloor&=-1+\left\lfloor\frac{1}{3}-\frac{j}{p-1}\right\rfloor + \left\lfloor\frac{2}{3}-\frac{j}{p-1}\right\rfloor.\notag
\end{align}
\end{lemma}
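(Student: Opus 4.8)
The plan is to reduce the claimed identity to an elementary statement about the floor function on the unit interval and then dispose of it by a direct case analysis, exactly in the spirit of the proofs of Lemmas \ref{exponent-1} and \ref{exponent-2}. First I would set $t=\frac{j}{p-1}$. Since $1\le j\le p-2$, the quantity $t$ is a rational number satisfying $0<t<1$, and the assertion becomes
\begin{align}
\left\lfloor -3t\right\rfloor = -1 + \left\lfloor \tfrac{1}{3}-t\right\rfloor + \left\lfloor \tfrac{2}{3}-t\right\rfloor, \qquad 0<t<1. \notag
\end{align}
It therefore suffices to verify this identity for every $t\in(0,1)$, independently of its arithmetic origin.

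Next I would partition $(0,1)$ at the two breakpoints $\tfrac{1}{3}$ and $\tfrac{2}{3}$, where the arguments of the floors cross integer values. On the open subinterval $(0,\tfrac{1}{3})$ one has $-3t\in(-1,0)$, $\tfrac{1}{3}-t\in(0,\tfrac{1}{3})$ and $\tfrac{2}{3}-t\in(\tfrac{1}{3},\tfrac{2}{3})$, so both sides equal $-1$; on $(\tfrac{1}{3},\tfrac{2}{3})$ one has $-3t\in(-2,-1)$, $\tfrac{1}{3}-t\in(-\tfrac{1}{3},0)$ and $\tfrac{2}{3}-t\in(0,\tfrac{1}{3})$, so both sides equal $-2$; and on $(\tfrac{2}{3},1)$ one has $-3t\in(-3,-2)$, $\tfrac{1}{3}-t\in(-\tfrac{2}{3},-\tfrac{1}{3})$ and $\tfrac{2}{3}-t\in(-\tfrac{1}{3},0)$, so both sides equal $-3$. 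This settles all $t$ at which $3t\notin\Z$.

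The only point requiring genuine care is the behaviour at the endpoints $t=\tfrac{1}{3}$ and $t=\tfrac{2}{3}$, which occur precisely when $3\mid(p-1)$ and $j=\tfrac{p-1}{3}$ or $j=\tfrac{2(p-1)}{3}$; these are exactly the places where $-3t$ is an integer, so one must check that the floors pick up the correct values there rather than relying on an open-interval argument. At $t=\tfrac{1}{3}$ one gets $\lfloor-3t\rfloor=-1$, $\lfloor\tfrac{1}{3}-t\rfloor=\lfloor 0\rfloor=0$ and $\lfloor\tfrac{2}{3}-t\rfloor=\lfloor\tfrac{1}{3}\rfloor=0$, giving $-1$ on both sides; at $t=\tfrac{2}{3}$ one gets $\lfloor-3t\rfloor=-2$, $\lfloor\tfrac{1}{3}-t\rfloor=\lfloor-\tfrac{1}{3}\rfloor=-1$ and $\lfloor\tfrac{2}{3}-t\rfloor=\lfloor 0\rfloor=0$, giving $-2$ on both sides. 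Since the identity holds on each of the three open subintervals and at both boundary values, it holds for all $t\in(0,1)$, hence for all $j$ with $1\le j\le p-2$, which is the assertion. The main obstacle is thus purely bookkeeping: isolating the two boundary values from the generic subintervals, since that is where a careless interval estimate could misassign a floor.
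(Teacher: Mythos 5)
Your proof is correct, and it follows essentially the same route as the paper's: the paper disposes of this lemma by the same interval case analysis (conditioning on the value of $\left\lfloor \frac{3j}{p-1}\right\rfloor$, in analogy with its proof of Lemma~\ref{exponent-4}), merely leaving the verification implicit. Your version simply writes out the three subintervals and the two boundary points $t=\tfrac13,\tfrac23$ explicitly, and all the floor evaluations check out.
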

\begin{proof}
The proof follows similarly as in the proof of Lemma \ref{exponent-4}.
\end{proof}

\begin{proposition}\label{3G3}
Let $p$ be an odd prime such that $p\equiv1\pmod{3}$, $\lambda\in\F_p\setminus\{1\}$ and $\psi_6=\omega^{\frac{p-1}{6}}$ be a character of order 6. Then we have 
$$
\psi_6(-1)p^2(p-1)\cdot{_3G_3}(\lambda)_p=C_p\left(\frac{\lambda}{(1-\lambda)^2}\right).
$$
\end{proposition}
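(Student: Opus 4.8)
The plan is to express the hypergeometric function $_3\mathbb{G}_3$ on the left-hand side entirely in terms of Gauss sums via the Gross--Koblitz formula, and then to recognize the resulting character sum as a twisted version of $C_p$. First I would expand the definition of $_3G_3(\lambda)_p$, writing out
\[
{_3\mathbb{G}_3}\left[\begin{matrix}\frac{1}{3}, & \frac{1}{3}, & \frac{1}{3}\\ \frac{1}{12}, & \frac{7}{12}, & \frac{5}{6}\end{matrix}\mid \frac{-4\lambda}{(1-\lambda)^2}\right]_p
\]
from Definition \ref{defin1} as a sum over $a$ from $0$ to $p-2$, and then convert each ratio $\Gamma_p(\langle a_k-\tfrac{a}{p-1}\rangle)/\Gamma_p(\langle a_k\rangle)$ and $\Gamma_p(\langle -b_k+\tfrac{a}{p-1}\rangle)/\Gamma_p(\langle -b_k\rangle)$ into Gauss sums using Theorem \ref{gross-koblitz}. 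The prefactor $\psi_3(\lambda/(1-\lambda)^2)$ together with the argument $\overline{\omega}^a(-4\lambda/(1-\lambda)^2)$ should, after collecting the $\omega$-powers, reorganize into a clean power of $\overline{\omega}^a$ evaluated at $\lambda/(1-\lambda)^2$, which is exactly the argument appearing in $C_p$.

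The key technical step is handling the parameters $\{\tfrac{1}{12},\tfrac{7}{12},\tfrac{5}{6}\}$ in the bottom row. These denominators $12$ and $6$ are not the denominators $2$ and $1$ appearing in $C_p$'s Gauss-sum factors $g(\phi\overline{\omega}^{2j})g(\omega^j)^3 g(\phi\overline{\omega}^j)$, so I would invoke the $p$-adic gamma product formulas \eqref{prod-1} and \eqref{prod-2}, and crucially the floor-function identities in Lemmas \ref{exponent-4}, \ref{exponent-1}, \ref{exponent-2}, \ref{exponent-3}. These lemmas are precisely engineered to rewrite the $(-p)^{-\lfloor\cdots\rfloor-\lfloor\cdots\rfloor}$ power-of-$p$ corrections: for instance Lemma \ref{exponent-4} combines the $\tfrac{11}{12}$ and $\tfrac{5}{12}$ shifts into a single $\tfrac{2j}{p-1}$ shift, allowing three characters of denominator $12$ to be assembled via the duplication/multiplication formula into a character of denominator $2$ (i.e.\ a $\phi$ twist) times lower-level pieces. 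The strategy is to match the $\Gamma_p$-products arising from $\{\tfrac13,\tfrac13,\tfrac13\}$ on top against a factor $g(\omega^j)^3$, and the bottom parameters against $g(\phi\overline{\omega}^{2j})$ and $g(\phi\overline{\omega}^j)$, with the $p$-power bookkeeping absorbed by the floor identities and the sign $\psi_6(-1)$ absorbing the $\omega^j(-1)$ factors from \eqref{lemma-1} and \eqref{inverse}.

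After the Gauss-sum conversion, I would reindex the summation variable (replacing $a$ by $j$, or $j$ by $-j$ as needed to align $\overline{\omega}^a$ with $\overline{\omega}^j$) and verify that the constant $\psi_6(-1)p^2(p-1)$ emerges correctly: the $p^2(p-1)$ should come from the normalization $\tfrac{-1}{p-1}$ in Definition \ref{defin1} together with the powers of $-p$ and the Gross--Koblitz factors $-\pi^{(p-1)\langle\cdots\rangle}$, whose $\pi$-powers must cancel to leave only integer powers of $p$. The sign $\psi_6(-1)=\omega^{(p-1)/6}(-1)$ should account for the accumulated $\omega^j(-1)$ terms from the reflection and inversion formulas.

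The main obstacle I anticipate is the careful bookkeeping of the power-of-$p$ factors and the signs. Each application of the product formulas \eqref{prod-1}, \eqref{prod-2} and of Gross--Koblitz introduces powers of $\pi$ (hence half-integral or fractional-looking powers of $p$) and multiplicative-character sign factors $\omega^j(-1)$; ensuring that all the $\pi$-powers cancel to yield the stated integer power $p^2(p-1)$, and that every stray sign collapses into exactly $\psi_6(-1)$, requires the floor-function identities to hold on the nose for all $j$ in the range and for the relevant residue class $p\equiv 1\pmod 3$ (so that $\psi_3,\psi_6$ are well-defined characters). Getting the denominators $12$ and $6$ to reduce compatibly to the denominators $2$ and $1$ of $C_p$ is where the bulk of the delicate computation lies, and it is the step most likely to hide a subtle error.
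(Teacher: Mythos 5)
Your plan follows the paper's proof essentially step for step: expand Definition \ref{defin1}, use the duplication formula \eqref{prod-1} together with the floor identity of Lemma \ref{exponent-4} to merge the $\tfrac{11}{12}$ and $\tfrac{5}{12}$ parameters into a single $\tfrac{2j}{p-1}$ shift, convert everything to Gauss sums via Gross--Koblitz, reindex the sum (the paper shifts $j\mapsto j+\tfrac{p-1}{3}$ to absorb the $\psi_3$ prefactor), and evaluate the leftover constant as $\psi_6(-1)$. This is the same approach as the paper's, so the proposal is correct in outline.
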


\begin{proof}

Using the definition of ${_3G_3}(\lambda)_p$ and the fact that $\omega^j(-1)=(-1)^j$ as $p$ is odd, we may have

\begin{align}
{_3G_3}(\lambda)_p&=\frac{\psi_3\left(\frac{\lambda}{(1-\lambda)^2}\right)}{\beta_p(1-p)}\sum\limits_{j=0}^{p-2}\overline{\omega}^{j}~ \left(\frac{4\lambda}{(1-\lambda)^2}\right)(-p)^{\alpha_j}
 \Gamma_p\left(\left\langle\frac{11}{12}+\frac{j}{p-1}\right\rangle\right)\notag\\
&\times \Gamma_p\left(\left\langle\frac{5}{12}+\frac{j}{p-1}\right\rangle\right)\Gamma_p\left(\left\langle\frac{1}{6}+\frac{j}{p-1}\right\rangle\right)
 \Gamma_p\left(\left\langle\frac{1}{3}-\frac{j}{p-1}\right\rangle\right)^3,
\end{align}
where $$\alpha_j=-\left\lfloor\frac{11}{12}+\frac{j}{p-1}\right\rfloor-\left\lfloor\frac{5}{12}+\frac{j}{p-1}\right\rfloor-\left\lfloor\frac{1}{6}+\frac{j}{p-1}\right\rfloor
-3\left\lfloor\frac{1}{3}-\frac{j}{p-1}\right\rfloor$$ and $$\beta_p=\Gamma_p(1/3)^3\Gamma_p(11/12)\Gamma_p(5/12)\Gamma_p(1/6).$$

Now, applying the multiplication formula of $p$-adic gamma function given in \eqref{prod-1} for $m=2$ and $x=\left\langle\frac{-1}{6}+\frac{2j}{p-1}\right\rangle$ we have

\begin{align}\label{final-eq-1}
{_3G_3}(\lambda)_p&=\frac{\overline{\psi}_6(2)\Gamma_p(\frac{1}{2})\psi_3\left(\frac{\lambda}{(1-\lambda)^2}\right)}{(1-p)\beta_p}\sum\limits_{j=0}^{p-2}\overline{\omega}^{j}~ \left(\frac{\lambda}{(1-\lambda)^2}\right)(-p)^{\alpha_j}
 \Gamma_p\left(\left\langle\frac{-1}{6}+\frac{2j}{p-1}\right\rangle\right)\\
 &\times \Gamma_p\left(\left\langle\frac{1}{6}+\frac{j}{p-1}\right\rangle\right)
 \Gamma_p\left(\left\langle\frac{1}{3}-\frac{j}{p-1}\right\rangle\right)^3.\notag
\end{align}

Moreover, by using Lemma \ref{exponent-4} in the expression of $\alpha_j,$ we have 
\begin{align}
\alpha_j&=-1-\left\lfloor\frac{-1}{6}+\frac{2j}{p-1}\right\rfloor-\left\lfloor\frac{1}{6}+\frac{j}{p-1}\right\rfloor-3\left\lfloor\frac{1}{3}-\frac{j}{p-1}\right\rfloor\notag\\
&=-2+\left\langle\frac{-1}{6}+\frac{2j}{p-1}\right\rangle+3\left\langle\frac{1}{3}-\frac{j}{p-1}\right\rangle
+\left\langle\frac{1}{6}+\frac{j}{p-1}\right\rangle.\notag
\end{align}

 Now, if we use Gross-Koblitz formula (Theorem \ref{gross-koblitz}) on the right side of \eqref{final-eq-1}, then we have
 
 \begin{align}
{_3G_3}(\lambda)_p&=\frac{\overline{\psi}_6(2)\Gamma_p(\frac{1}{2})\psi_3\left(\frac{\lambda}{(1-\lambda)^2}\right)}{p^2(p-1)\beta_p}
 \sum\limits_{j=0}^{p-2}
g(\phi\overline{\omega}^{2j-\frac{2(p-1)}{3}})g(\omega^{j-\frac{p-1}{3}})^3g(\phi\overline{\omega}^{j-\frac{p-1}{3}})\overline{\omega}^{j}~ \left(\frac{\lambda}{(1-\lambda)^2}\right).\notag
 \end{align}
 Now, transforming $j\mapsto j+\frac{p-1}{3}$ we obtain

\begin{align}\label{eqn-10may-1}
{_3G_3}(\lambda)_p
&=\frac{\overline{\psi}_6(2)\Gamma_p(\frac{1}{2})}{p^2(p-1)\beta_p}\cdot C_p\left(\frac{\lambda}{(1-\lambda)^2}\right).
\end{align}
Now, we aim to show that $\frac{\overline{\psi}_6(2)\Gamma_p(\frac{1}{2})}{\beta_p}=\psi_6(-1).$ If we apply \eqref{prod-1} for $x=5/6$ and $m=2,$ then we have 
\begin{align}\label{eqn-10may}
\frac{\overline{\psi}_6(2)\Gamma_p(\frac{1}{2})}{\beta_p}=\frac{1}{\Gamma_p(1/3)^3\Gamma_p(1/6)\Gamma_p(5/6)}.
\end{align} 

\noindent Now, applying \eqref{prod} for $x=1/6$ we have 
$$\Gamma_p\left(\frac{1}{6}\right)\Gamma_p\left(\frac{5}{6}\right)=-\psi_6(-1).$$ 

\noindent Moreover, by using Theorem \ref{gross-koblitz} and \eqref{inverse} we have 
$$\Gamma_p\left(\frac{1}{3}\right)^3=\frac{g(\psi_3) g(\psi_3)}{g(\psi_3^2)}=J(\psi_3,\psi_3)=-1.$$ Substituting the above two values in \eqref{eqn-10may} we conclude that  $\frac{\overline{\psi}_6(2)\Gamma_p(\frac{1}{2})}{\beta_p}=\psi_6(-1).$ Finally, substituting this value in \eqref{eqn-10may-1} we complete the proof.

\end{proof}

\begin{proposition}\label{9G9}
Let $p$ be an odd prime such that $p\equiv2\pmod{3}$ and $\lambda\in\F_p\setminus\{1\}.$ Then we have 
$$
p(p-1)\phi(-1)\cdot{_9G_9}(\lambda)_p=C_p\left(\frac{\lambda}{(1-\lambda)^2}\right).
$$
\end{proposition}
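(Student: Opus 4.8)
The plan is to mirror the structure of the proof of Proposition \ref{3G3}, the key difference being that here $p\equiv2\pmod 3$, so the Teichm\"uller-character shift by $\frac{p-1}{3}$ is no longer an integer and the order-$3$ and order-$6$ characters $\psi_3,\psi_6$ do not exist over $\F_p$. This forces a different bookkeeping of the nine numerator and denominator parameters. First I would expand ${_9G_9}(\lambda)_p$ directly from Definition \ref{defin1} with $n=9$, writing out the argument $\frac{-4^3\lambda^3}{(1-\lambda)^6}$ and using $\overline{\omega}^j(-1)=(-1)^j$ to absorb the sign factor $(-1)^{9j}=(-1)^j$. This produces a single sum over $0\le j\le p-2$ of a product of nine $\Gamma_p$-values at the shifted arguments $\langle a_k-\frac{j}{p-1}\rangle$ and $\langle -b_k+\frac{j}{p-1}\rangle$, against a power $(-p)^{\alpha_j}$ whose exponent $\alpha_j$ collects the nine floor terms.

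The central computational step is to collapse these nine $\Gamma_p$-factors down to the four Gauss sums $g(\phi\overline{\omega}^{2j})$, $g(\omega^j)^3$, $g(\phi\overline{\omega}^j)$ appearing in $C_p$. The plan is to apply the multiplication formula \eqref{prod-1} repeatedly: with $m=2$ I would fuse the numerator/denominator pairs that combine into the half-integer arguments governing $g(\phi\,\cdot\,)$, and with $m=3$ I would fuse the triples of parameters $\left(\frac13,\frac13,\frac13\right)$, $\left(\frac23,\frac23\right)$ together with the relevant denominators into the cubes and into the order-$3$ pieces. The denominator data $\left\{\frac{1}{12},\frac{5}{12},\frac{7}{12},\frac{11}{12},\frac16,\frac56,\frac14,\frac34,\frac12\right\}$ is arranged precisely so that the $m=2$ and $m=3$ multiplication formulas telescope; Lemmas \ref{exponent-4}, \ref{exponent-1}, \ref{exponent-2}, and \ref{exponent-3} will supply the matching identities among the floor functions that show the accumulated exponent $\alpha_j$ is exactly the one demanded by the Gross--Koblitz formula (Theorem \ref{gross-koblitz}). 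After these reductions I would invoke Theorem \ref{gross-koblitz} to rewrite each surviving $\Gamma_p$-value as a Gauss sum, so that the sum becomes $\frac{c_p}{p(p-1)}\sum_{j}g(\phi\overline{\omega}^{2j})g(\omega^j)^3g(\phi\overline{\omega}^j)\overline{\omega}^j\!\left(\frac{\lambda}{(1-\lambda)^2}\right)$ for an explicit constant $c_p$; no index shift $j\mapsto j+\frac{p-1}{3}$ is available or needed here, which is why the argument of $C_p$ comes out directly as $\frac{\lambda}{(1-\lambda)^2}$ with no residual character twist.

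The final step is to show the leftover constant equals $\phi(-1)$, i.e.\ $c_p=\phi(-1)$. I expect this to reduce, as in Proposition \ref{3G3}, to evaluating a product of $\Gamma_p$-values at the fixed fractions using the reflection formula \eqref{prod}, the relation \eqref{lemma-1}, and the Jacobi-sum identity $\Gamma_p(1/3)^3=J(\psi_3,\psi_3)$-type evaluations; since $p\equiv2\pmod 3$, however, the order-$3$ Gauss sums must be handled via the $m=3$ case of \eqref{prod-1} rather than through $\psi_3$ directly, and $\phi$ survives as the only quadratic character in play, giving the sign $\phi(-1)$.

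The main obstacle I anticipate is the multiplicity bookkeeping in the middle step: verifying that the nine shifted arguments regroup cleanly under the combined $m=2$ and $m=3$ multiplication formulas, and simultaneously that the exponent $\alpha_j$ matches, requires carefully pairing each $b_k$ with the correct multiplication identity and confirming every floor-function cancellation across Lemmas \ref{exponent-4}--\ref{exponent-3}. This is where an error in grouping would silently produce a wrong power of $p$ or a spurious character, so this is the step to carry out most meticulously.
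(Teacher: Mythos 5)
Your overall strategy is the paper's proof run in reverse: the paper starts from $C_p\bigl(\tfrac{\lambda}{(1-\lambda)^2}\bigr)$, applies Gross--Koblitz, and then \emph{splits} the resulting $\Gamma_p$-values via \eqref{prod-1} and \eqref{prod-2} into the nine parameters of ${_9\mathbb{G}_9}$, whereas you start from Definition \ref{defin1} and \emph{fuse}. Both directions use the same identities and the same exponent lemmas, so the reversal itself is harmless. However, there is one genuine gap, and it sits exactly at the point you declare unproblematic. After fusing the nine $\Gamma_p$-factors and applying Gross--Koblitz, the $j$-th summand carries the Gauss sums $g(\phi\overline{\omega}^{6j})$, $g(\omega^{3j})^3$, $g(\phi\overline{\omega}^{3j})$ together with $\overline{\omega}^{j}\bigl(\tfrac{-4^3\lambda^3}{(1-\lambda)^6}\bigr)=\overline{\omega}^{3j}\bigl(\tfrac{-4\lambda}{(1-\lambda)^2}\bigr)$ --- that is, every index appears as a multiple of $3$ (or $6$), not as the bare $2j$ and $j$ occurring in $C_p$. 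Your claim that the sum ``becomes $\frac{c_p}{p(p-1)}\sum_j g(\phi\overline{\omega}^{2j})g(\omega^j)^3g(\phi\overline{\omega}^j)\overline{\omega}^j(\tfrac{\lambda}{(1-\lambda)^2})$'' with ``no index shift available or needed'' is therefore false as stated: you must reindex by $j\mapsto 3j$ (equivalently, invert $3$ modulo $p-1$), and this substitution is a bijection on $\{0,\dots,p-2\}$ precisely because $p\equiv2\pmod 3$ forces $\gcd(3,p-1)=1$. This is where the congruence hypothesis actually does its work --- not merely in the non-existence of $\psi_3$ and $\psi_6$, which is how you motivate it. Without this step the cubed argument $\lambda^3/(1-\lambda)^6$ of ${_9G_9}$ cannot be reconciled with the argument $\lambda/(1-\lambda)^2$ of $C_p$.

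Two smaller cautions on the fusion step you rightly identify as delicate. First, the six denominator parameters $\tfrac{1}{12},\tfrac{5}{12},\tfrac{7}{12},\tfrac{11}{12},\tfrac14,\tfrac34$ must be fused by the $m=6$ case of \eqref{prod-1} (or an equivalent composition) into $\Gamma_p\bigl(\langle\tfrac12+\tfrac{6j}{p-1}\rangle\bigr)$; your plan mentions only $m=2$ and $m=3$, and pairwise fusion will not directly produce this six-fold collapse. Second, the multiplication formulas emit character factors $\overline{\omega}^{6j}(6)$, $\overline{\omega}^{3j}(3)$, $\omega^{3j}(3)$, which must cancel against $\overline{\omega}^{3j}(-4)$ coming from the argument $-4^3\lambda^3/(1-\lambda)^6$ before the reindexing; if they do not cancel $j$-uniformly you would be left with a residual twist inside the sum, not just a constant $c_p$. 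In the paper these cancellations are verified explicitly and the leftover constant is $-\phi(2)\kappa_p$ with $\kappa_p=\Gamma_p(\tfrac{1}{12})\Gamma_p(\tfrac{11}{12})\Gamma_p(\tfrac{5}{12})\Gamma_p(\tfrac{7}{12})\Gamma_p(\tfrac14)\Gamma_p(\tfrac34)=-\phi(-2)$ by \eqref{prod}, giving $\phi(-1)$ as you predict.
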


\begin{proof}
For $\lambda\neq1,$ recall that
$$
C_p\left(\frac{\lambda}{(1-\lambda)^2}\right)=\sum\limits_{j=0}^{p-2} g(\phi\overline{\omega}^{2j})g(\omega^{j})^3g(\phi\overline{\omega}^{j})~\overline{\omega}^{j} \left(\frac{\lambda}{(1-\lambda)^2}\right).
$$

Since $\gcd(3,p-1)=1,$ so taking the transformation $j\rightarrow3j,$ we have
\begin{align}
C_p\left(\frac{\lambda}{(1-\lambda)^2}\right)&=\sum\limits_{j=0}^{p-2} g(\phi\overline{\omega}^{6j})g(\omega^{3j})^3g(\phi\overline{\omega}^{3j})~\overline{\omega}^{3j} \left(\frac{\lambda}{(1-\lambda)^2}\right).\notag
\end{align}
Now, applying Theorem \ref{gross-koblitz} on the right side of the above expression we obtain that

\begin{align}\label{eqn-30}
C_p\left(\frac{\lambda}{(1-\lambda)^2}\right)&=-\sum\limits_{j=0}^{p-2}\pi^{(p-1)e_j} \overline{\omega}^{j} \left(\frac{\lambda^3}{(1-\lambda)^6}\right)\Gamma_p\left(\left\langle\frac{1}{2}+\frac{6j}{p-1}\right\rangle\right) 
\Gamma_p\left(\left\langle\frac{-3j}{p-1}\right\rangle\right)^3\notag\\
&\times\Gamma_p\left(\left\langle\frac{1}{2}+\frac{3j}{p-1}\right\rangle\right),
\end{align}
where $e_j=\left\langle\frac{1}{2}+\frac{6j}{p-1}\right\rangle+3\left\langle\frac{-3j}{p-1}\right\rangle+\left\langle\frac{1}{2}+\frac{3j}{p-1}\right\rangle=1-\left\lfloor\frac{1}{2}+\frac{6j}{p-1}\right\rfloor-3\left\lfloor\frac{-3j}{p-1}\right\rfloor
-\left\lfloor\frac{1}{2}+\frac{3j}{p-1}\right\rfloor.$
Then by using Lemma \ref{exponent-1}, Lemma \ref{exponent-2} and Lemma \ref{exponent-3}, we may write

\begin{align}\label{eqn-26}
e_j&=1-\left\lfloor\frac{1}{12}+\frac{j}{p-1}\right\rfloor - \left\lfloor\frac{5}{12}+\frac{j}{p-1}\right\rfloor-\left\lfloor\frac{7}{12}+\frac{j}{p-1}\right\rfloor-\left\lfloor\frac{11}{12}+\frac{j}{p-1}\right\rfloor-\left\lfloor\frac{1}{4}+\frac{j}{p-1}\right\rfloor\notag\\
&-\left\lfloor\frac{3}{4}+\frac{j}{p-1}\right\rfloor-\left\lfloor\frac{1}{6}+\frac{j}{p-1}\right\rfloor-\left\lfloor\frac{5}{6}+\frac{j}{p-1}\right\rfloor-\left\lfloor\frac{1}{2}+\frac{j}{p-1}\right\rfloor-3\left\lfloor\frac{1}{3}-\frac{j}{p-1}\right\rfloor\notag\\
&-3\left\lfloor\frac{2}{3}-\frac{j}{p-1}\right\rfloor-3\left\lfloor\frac{-j}{p-1}\right\rfloor.
\end{align}

Now, applying the product formula given in \eqref{prod-1} we have 

\begin{align}\label{eqn-27}
\Gamma_p\left(\left\langle\frac{1}{2}+\frac{6j}{p-1}\right\rangle\right) &=\phi(6)\overline{\omega}^{6j}(6)\times \frac{\Gamma_p\left(\left\langle\frac{1}{12}+\frac{j}{p-1}\right\rangle\right) \Gamma_p\left(\left\langle\frac{1}{4}+\frac{j}{p-1}\right\rangle\right) \Gamma_p\left(\left\langle\frac{5}{12}+\frac{j}{p-1}\right\rangle\right) }{\Gamma_p(\frac{1}{6}) \Gamma_p(\frac{1}{3})\Gamma_p(\frac{1}{2})\Gamma_p(\frac{2}{3})\Gamma_p(\frac{5}{6})}\vspace{5mm}\notag\\
&\times \Gamma_p\left(\left\langle\frac{7}{12}+\frac{j}{p-1}\right\rangle\right) \Gamma_p\left(\left\langle\frac{3}{4}+\frac{j}{p-1}\right\rangle\right) \Gamma_p\left(\left\langle\frac{11}{12}+\frac{j}{p-1}\right\rangle\right),
\end{align}
and 

\begin{align}\label{eqn-28}
\Gamma_p\left(\left\langle\frac{1}{2}+\frac{3j}{p-1}\right\rangle\right) &=\phi(3)\overline{\omega}^{3j}(3)\times \frac{\Gamma_p\left(\left\langle\frac{1}{6}+\frac{j}{p-1}\right\rangle\right) \Gamma_p\left(\left\langle\frac{1}{2}+\frac{j}{p-1}\right\rangle\right)
\Gamma_p\left(\left\langle\frac{5}{6}+\frac{j}{p-1}\right\rangle\right)}{\Gamma_p(\frac{1}{3})\Gamma_p(\frac{2}{3})}.
\end{align}

Also, applying the product formula given in \eqref{prod-2} we have 

\begin{align}\label{eqn-29}
\Gamma_p\left(\left\langle\frac{-3j}{p-1}\right\rangle\right)=\omega^{3j}(3)\times \frac{\Gamma_p\left(\left\langle\frac{1}{3}-\frac{j}{p-1}\right\rangle\right) \Gamma_p\left(\left\langle\frac{2}{3}-\frac{j}{p-1}\right\rangle\right)
\Gamma_p\left(\left\langle1-\frac{j}{p-1}\right\rangle\right)}{\Gamma_p(\frac{1}{3})\Gamma_p(\frac{2}{3})}.
\end{align}

Substituting \eqref{eqn-26}, \eqref{eqn-27}, \eqref{eqn-28}, \eqref{eqn-29} into \eqref{eqn-30} we obtain that
\begin{align}
C_p\left(\frac{\lambda}{(1-\lambda)^2}\right)=-p(p-1)\phi(2)\cdot \kappa_p\cdot {_9G_9}(\lambda)_p,
\end{align}
where $\kappa_p=\Gamma_p(\frac{1}{12})\Gamma_p(\frac{11}{12})\Gamma_p(\frac{5}{12})\Gamma_p(\frac{7}{12})\Gamma_p(\frac{1}{4})\Gamma_p(\frac{3}{4}).$ Finally, using \eqref{prod-1} and \eqref{prod} we deduce that
$$\kappa_p=(-1)^{a_0(1/12)+a_0(5/12)+a_0(1/4)}=(-1)^{a_0(3/4)}=-\phi(-2).$$ 
Thus, we have

$$
C_p\left(\frac{\lambda}{(1-\lambda)^2}\right)=p(p-1)\phi(-1)\cdot {_9G_9}(\lambda)_p.
$$
This completes the proof.

\end{proof}


\section{Proof of theorems}
\begin{proof}[Proof of Theorem \ref{Frobenius-Moments}] First we prove even moments. If $m$ is even and $\lambda\neq0,$ then Proposition \ref{Trace-1} gives
$$
A_p(\lambda)^m=\frac{1}{p^m(p-1)^m}C_p\left(\frac{1-\lambda}{\lambda^2}\right)^m.
$$
Then replacing $\lambda$ by $1-\lambda$ in Lemma \ref{3F2-Lemma} and using the resultant expression for $\lambda\neq0,2$ we deduce that
\begin{align}\label{eqn-1}
A_p(\lambda)^m=p^{2m}{_3F_2}(1-\lambda)_p^m,
\end{align}
and $$A_p(2)^m=p^m(p\phi(-1)\cdot {_3F_2}(-1)_p-1)^m.$$

\noindent Using Theorem 6 (iii) of \cite{ono} we have that 
\begin{align}\label{eqn-2}
|p\cdot {_3F_2}(-1)_p|\leq3.
\end{align} Using this bound, for $p\rightarrow\infty,$ we have 
\begin{align}\label{eqn-3}
A_p(2)^m=o_m(p^{m+1}).
\end{align}
\noindent  Moreover, applying Theorem 4 of \cite{ono}, we obtain that as $p\rightarrow\infty$
\begin{align}\label{eqn-5}
p^{2m}\cdot {_3F_2}(1)_p^{m}=o_m(p^{m+1}).
\end{align}
Using \eqref{eqn-1} we have 

\begin{align}\label{eqn-4}
\sum\limits_{\lambda\in\F_p^{\times}}A_p(\lambda)^{m}&=\sum\limits_{\lambda\in\F_p^{\times}\backslash\{2\}}A_p(\lambda)^{m}+A_p(2)^{m}\notag\\
&=p^{2m}\sum\limits_{\lambda\in\F_p^{\times}\backslash\{2\}}{_3F_2}(1-\lambda)_p^{m}+A_p(2)^{m}\notag\\
&=p^{2m}\sum\limits_{\lambda\in\F_p}{_3F_2}(1-\lambda)_p^{m}-p^{2m}\cdot{_3F_2(1)_p}^{m}-p^{2m}\cdot{_3F_2(-1)_p}^{m}+A_p(2)^{m}\notag\\
&=p^{2m}\sum\limits_{\lambda\in\F_p}{_3F_2}(\lambda)_p^{m}-p^{2m}\cdot{_3F_2(1)_p}^{m}-p^{2m}\cdot{_3F_2(-1)_p}^{m}+A_p(2)^{m}.
\end{align}
The last equality is obtained by taking the transformation $\lambda\mapsto1-\lambda.$ Now, by using Theorem \ref{3F2-moments}, \eqref{eqn-2}, \eqref{eqn-3} and \eqref{eqn-5} we conclude the even moments. To obtain the odd moments, let $m$ be an odd positive integer. Then using similar arguments as in the even moments we have 

\begin{align}\label{eqn-15}
\sum\limits_{\lambda\in\F^{\times}_p}A_p(\lambda)^{m}&=\sum\limits_{\lambda\in\F_p^{\times}\backslash\{2\}}A_p(\lambda)^{m}+A_p(2)^{m}\notag\\
&=-p^{2m}\sum\limits_{\lambda\in\F_p\backslash\{2\}}\phi(\lambda){_3F_2}(1-\lambda)^{m}_p+A_p(2)^{m}\notag\\
&=-p^{2m}\sum\limits_{\lambda\in\F_p}\phi(\lambda){_3F_2}(1-\lambda)_p^{m}+p^{2m}\cdot\phi(2)\cdot{_3F_2(-1)}_p^{m}+A_p(2)^{m}\notag\\
&=-M_m+p^{2m}\cdot\phi(2)\cdot{_3F_2(-1)}_p^{m}+A_p(2)^{m},
\end{align} 
where
$$
M_m:=p^{2m}\sum\limits_{\lambda\in\F_p}\phi(\lambda){_3F_2}(1-\lambda)_p^{m}
$$
and 
\begin{align}\label{eqn-12may-1}
A_p(2)^m=-\phi(-1)p^m(p\cdot\phi(-2)\cdot{_3F_2(-1)_p}-1)^m.
\end{align}

Since
\begin{align}
\sum\limits_{\lambda\in\F_p}(1+\phi(\lambda)){_3F_2}(1-\lambda)_p^{m}&=\sum\limits_{\lambda\in\F_p}{_3F_2}(1-\lambda^2)_p^{m},\notag
\end{align}

so we have 

\begin{align}\label{eqn-14}
M_m=p^{2m}\sum\limits_{\lambda\in\F_p}{_3F_2}(1-\lambda^2)_p^{m}-p^{2m}\sum\limits_{\lambda\in\F_p}{_3F_2}(1-\lambda)_p^{m}.
\end{align}

Taking the transformation $\lambda\mapsto \lambda/2$ and using Theorem 4.2 (ii) of \cite{greene} we have 

\begin{align}\label{eqn-6}
\sum\limits_{\lambda\in\F_p}{_3F_2}(1-\lambda^2)_p^{m}=\sum\limits_{\lambda\in\F_p}{_3F_2}\left(\frac{4-\lambda^2}{4}\right)_p^{m}=\sum\limits_{\lambda\in\F_p\setminus\{\pm2\}}\phi(\lambda^2-4){_3F_2}\left(\frac{4}{4-\lambda^2}\right)_p^{m}.
\end{align}

For $\lambda^2\neq0,-1,4,$ if we apply Theorem 5 of \cite{ono}, then we have 

\begin{align}\label{eqn-7}
{_3F_2}\left(\frac{4}{4-\lambda^2}\right)_p=\phi(\lambda^4-4\lambda^2)\cdot\frac{(a_p^{\Cl}(\lambda^2))^2-p}{p^2}.
\end{align}
Now, using \eqref{eqn-7} in \eqref{eqn-6} we have
\begin{align}\label{eqn-8}
\sum\limits_{\lambda\in\F_p}{_3F_2}(1-\lambda^2)_p^{m}=\frac{1}{p^{2m}}\sum\limits_{\substack{\lambda\in\F_p\setminus\{0,\pm2\}\\ \lambda^2\neq-1}}(a_p^{\Cl}(\lambda^2)^2-p)^m+\phi(-5){_3F_2\left(\frac{4}{5}\right)_p^m}.
\end{align}
Now, by using Proposition \ref{moment-proposition}, \eqref{Hasse-bound} and \eqref{3F2bound} in \eqref{eqn-8} we deduce that 

\begin{align}\label{eqn-13}
p^{m-1}\sum\limits_{\lambda\in\F_p}{_3F_2}(1-\lambda^2)^m=\sum\limits_{i=0}^{m}(-1)^{i}{m\choose i}\frac{(2m-2i)!}{(m-i)!(m-i+1)!}+o_m(1).
\end{align}

If we use \eqref{eqn-13} on the first summation present on right side of \eqref{eqn-14} and Theorem \ref{3F2-moments} on the second summation of the right side of \eqref{eqn-14} for $m$ odd, then we obtain 

\begin{align}\label{eqn-16}
\sum\limits_{\lambda\in\F^{\times}_p}A_p(\lambda)^{m}=&-\sum\limits_{i=0}^{m}(-1)^{i}{m\choose i}\frac{(2m-2i)!p^{m+1}}{(m-i)!(m-i+1)!}+p^{2m}\cdot\phi(2)\cdot{_3F_2(-1)}_p^{m}+A_p(2)^{m}\notag\\
&+o_m(p^{m+1}).
\end{align}
Using \eqref{eqn-2} in \eqref{eqn-12may-1} we have 

$$
p^{2m}\cdot\phi(2)\cdot{_3F_2(-1)}_p^{m}+A_p(2)^{m}=o_m(p^{m+1}).
$$
Substituting this in \eqref{eqn-16} and then replacing $i$ by $m-i$ we deduce the result.

\end{proof}

To prove the distributions, we first fix some notation. Let $\mathbb{P}$ denote the set of primes. For each prime $p\in\mathbb{P},$ we define a function 
$$
f_p:\F_p\rightarrow[-3,3]
$$
and consider the probability space $(\Omega_p,\mathcal{F}_p,\mu_p),$ where $\Omega_p=\F_p,\mathcal{F}_p=\mathcal{P}(\F_p)$ and $\mu_p=\frac{|A|}{p}$ for all $A\in\mathcal{F}_p.$ 

\begin{proof}[Proof of Corollary \ref{Frobenius-Distribution}]
For the random variable $X_p=f_p,$ we have 
$$
\lim\limits_{p\rightarrow\infty}E(X_p^m)=\sum\limits_{i=0}^{m}(-1)^{i}{m\choose i}\frac{(2i)!}{(i)!(i+1)!}.
$$
Moreover, consider the probability space $(\Omega, \mathcal{F},\mu),$ where $\Omega=[-3,3],$ $\mathcal{F}$ is the collection of all Lebesgue measurable subsets of $\Omega$ and $\mu$ is the measure 
$\mu([a,b]):=\frac{1}{2\pi}\int\limits_{a}^bf(t) dt.$
Now, \begin{align}\label{eqn-13May-1}
E(X^m)=\frac{1}{2\pi}\int\limits_{-3}^3f(t)t^mdt=\frac{1}{2\pi}\int\limits_{-1}^3\sqrt{\frac{3-t}{1+t}}t^m dt.
\end{align} 
Now, following the arguments given in \cite[pg. 21 and pg. 22]{KHN} we conclude that 

$$
E(X^m)=\sum\limits_{i=0}^{m}(-1)^{i}{m\choose i}\frac{(2i)!}{(i)!(i+1)!}.
$$
Since the moment generating function has a positive radius of convergence, so the distribution of X is determined by its moments. Therefore,  $X_p$ converges in distribution to $X.$ This completes the proof.
\end{proof}

\begin{proof}[Proof of Theorem \ref{3G3-Moments}]
From Proposition \ref{3G3} we have for $\lambda\neq1$
$$
\psi_6(-1)p^2(p-1)\cdot {_3G_3}(\lambda)=C_p\left(\frac{\lambda}{(1-\lambda^2)}\right).
$$

Now, replacing $\lambda$ by $1-\lambda$ and applying Proposition \ref{Trace-1} we 
$$
p\cdot{_3G_3}(1-\lambda)=-A_p(\lambda).
$$
Finally, applying Theorem \ref{Frobenius-Moments} on the right side of the above we derive the result. This completes the proof.

\end{proof}

\begin{proof}[Proof of Corollary \ref{3G3-Distribution}]
The proof follows along similar lines as in the proof of Corollary \ref{Frobenius-Distribution}. The only changes are that we consider the set $\mathbb{P}$ as the collection of primes $p$ such that $p\equiv1\pmod{3}$ and replace $t$ by $-t$ in 
\eqref{eqn-13May-1}.
\end{proof}

\begin{proof}[Proof of Theorem \ref{9G9-Moments}] 
Replacing $\lambda$ by $1-\lambda$ in Proposition \ref{9G9} we have 

$$
p(p-1)\phi(-1)\cdot{_9G_9}(1-\lambda)_p=C_p\left(\frac{1-\lambda}{\lambda^2}\right).
$$

Now, by Proposition \ref{Trace-1} we may have 
$$
{_9G_9}(1-\lambda)_p=-A_p(\lambda).
$$
Hence, using Theorem \ref{Frobenius-Moments} on the right side of the above we derive the result. This completes the proof.

\end{proof}

\begin{proof}[Proof of Corollary \ref{9G9-Distribution}]
The proof follows along similar lines as in the proof of Corollary \ref{Frobenius-Distribution}. The only changes are that we consider the set $\mathbb{P}$ as the collection of primes $p$ such that $p\equiv2\pmod{3}$ and replace $t$ by $-t$ in 
\eqref{eqn-13May-1}.
\end{proof}

\section{Acknowledgements}

The first author is supported by Science and Engineering Research Board [SRG/2023/000930] and the second author is supported by Science and Engineering Research Board [CRG/2023/003037].



\end{document}